\DeclareSymbolFont{cyrletters}{OT2}{wncyr}{m}{n}
\DeclareMathSymbol{\Sha}{\mathalpha}{cyrletters}{"58}
\theoremstyle{plain}
\newtheorem{theorem}{Theorem}[section]
\newtheorem{conjecture}[theorem]{Conjecture}
\newtheorem{lemma}[theorem]{Lemma}
\newtheorem{proposition}[theorem]{Proposition}
\newtheorem{remark}[theorem]{Remark}
\theoremstyle{definition}
\newtheorem{definition}[theorem]{Definition}
\theoremstyle{remark}
\numberwithin{equation}{section}
\newcommand{\R}{\mathbb R}
\newcommand{\floor}[1]{\left\lfloor #1 \right\rfloor}
\def\P{ {\bf P}}
\def\({\left(}
\def\){\right)}
\def\<{\left<}
\def\>{\right>}
\newcommand{\wt}[1]{\widetilde{#1}}
\newcommand{\abs}[1]{\left|#1\right|}
\begin{document}

\title[Partitions with no $k$-sequence]{A proof of Andrews' conjecture on Partitions with no short sequences}
\author{Daniel M. Kane}
\author{Robert C. Rhoades}
\address{Stanford University, Department of Mathematics, Bldg 380, Stanford, CA 94305}
\email{dankane@math.stanford.edu}
\email{rhoades@math.stanford.edu}

\thanks{Research of the authors is supported by
NSF Mathematical Sciences Postdoctoral Fellowships.}

\date{\today}
\thispagestyle{empty} \vspace{.5cm}
\begin{abstract}
Holroyd, Liggett, and Romik introduced the following probability model.
Let  $C_1, C_2, \cdots$ be independent events with probabilities
$\P_s(C_n)= 1-e^{-ns}$
under a probability measure $\P_s$ with $0<s<1$.
Let $A_k$ be the event
that there is no sequence of $k$ consecutive $C_i$ that do not occur.
We given an asymptotic for $\P_s(A_k)$  with a relative error term that goes to $0$ as $s\to 0$.
This establishes a conjecture of Andrews.
\end{abstract}


\maketitle

\section{Introduction and Statement of Results}
Holroyd, Liggett, and Romik \cite{HLR}
introduced probability models
whose properties are useful to the study of two dimensional cellular automata and integer partitions.
Let $0<s<1$ and $C_1, C_2, \cdots$ be independent events with probabilities
$$\P_s(C_n):= 1-e^{-ns}$$
under a probability measure $\P_s$.
Let $A_k$ be the event
$$A_k = \bigcap_{i=1}^\infty \( C_i \cup C_{i+1} \cup \cdots \cup C_{i+k-1}\)$$
that there is no sequence of $k$ consecutive $C_i$ values that do not occur.


Andrews \cite{AndPNAS} exhibited a connection between $\P_s(A_2)$ and one of
Ramanujan's mock theta functions $\chi(q)$. Later Andrews,  Eriksson, Petrov, and Romik \cite{AEPR}
explained further connections between this mock theta function and conditional probabilities in some probability spaces.
No similar connections have been discovered for the other probability models.
Andrews \cite{AndPNAS}, using $q$-series identities, made the following conjecture.
\begin{conjecture}\label{conj:main} For each $k\ge 2$, there exists a positive constant $C_k$ such that
$$\P_s(A_k) \sim C_k s^{-\frac{1}{2}} \exp\( -\frac{\lambda_k}{s}\) \text{ as } s \downarrow 0 $$
with $\lambda_k := \frac{\pi^2}{3k(k+1)}.$
\end{conjecture}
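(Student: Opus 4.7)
My plan is to reduce $\P_s(A_k)$ to the asymptotic analysis of a single $q$-series and then extract that asymptotic by a transfer-operator / WKB argument. Parameterizing each outcome by the failure set $S=\{n:C_n \text{ does not occur}\}$ and using independence,
$$
\P_s(A_k) = \prod_{n=1}^{\infty}(1-q^n) \cdot f_k(q), \qquad f_k(q)=\sum_{S}\prod_{n\in S}\frac{q^n}{1-q^n},
$$
where $q=e^{-s}$ and $S$ ranges over subsets of $\N$ containing no $k$ consecutive integers. The first factor has the classical asymptotic $\sqrt{s/(2\pi)}\exp(-\pi^2/(6s))$ coming from the modular transformation of the Dedekind eta function, so the problem reduces to the asymptotic of $f_k(q)$ as $s\downarrow 0$.

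I would build $S$ from left to right, tracking the number $j\in\{0,1,\dots,k-1\}$ of consecutive $S$-elements immediately preceding the current position. The associated $k\times k$ transfer matrix $M(w)$ has first row $(1,1,\dots,1)$ and the scalar $w$ on the super-diagonal, and $f_k(q)$ is encoded as a boundary-sandwiched infinite product of $M(w_n)$ with $w_n=1/(e^{ns}-1)$. Introducing the parameter $t>0$ by $w=t+t^2+\cdots+t^k$, one finds the Perron eigenvalue explicitly as $\mu_k(w)=1+t+t^2+\cdots+t^{k-1}=(1-t^k)/(1-t)$.

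The main analytic step is a WKB / adiabatic approximation, valid because $w_n$ varies on the slow scale $n\sim 1/s$: the infinite matrix product is governed by the Perron eigenvalue, and Euler--Maclaurin converts the resulting sum into an integral,
$$
\log f_k(q) \;=\; \frac{1}{s}\int_0^\infty \log\mu_k\!\left(\frac{1}{e^x-1}\right) dx \;+\; O(\log(1/s)).
$$
Combined with the Dedekind-eta asymptotic this yields $\log\P_s(A_k)=-\Lambda_k/s+O(\log(1/s))$ with $\Lambda_k=\pi^2/6-\int_0^\infty \log\mu_k(1/(e^x-1))\,dx$, and the required identity $\Lambda_k=\pi^2/(3k(k+1))=\lambda_k$ then reduces to an explicit evaluation. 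For $k=2$, the factorization $1-t+t^2=(1+t^3)/(1+t)$ expresses the integral as $\operatorname{Li}_2(1) - \operatorname{Li}_2(-1)-\frac13\operatorname{Li}_2(-1)$ combinations, giving $\pi^2/9$ and hence $\Lambda_2=\pi^2/18=\lambda_2$, which is a reassuring consistency check. For general $k$, the product form $\mu_k(t)=(1-t^k)/(1-t)$ similarly expresses the integral in terms of dilogarithms at $k$-th and $(k+1)$-st roots of unity, which should telescope to $\pi^2/(3k(k+1))$.

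To upgrade to the full asymptotic with the $s^{-1/2}$ prefactor and explicit constant $C_k$, I would sharpen the WKB estimate to the next order: a second-order Euler--Maclaurin correction, a Gaussian fluctuation transverse to the dominant eigenvector, and the precise overlap of that eigenvector with the initial and final state vectors combine with the Dedekind-eta prefactor to produce the $s^{-1/2}$ factor and the constant. The main obstacle I anticipate is uniform control of the WKB error across the three regimes $n \ll 1/s$, $n\sim 1/s$, and $n\gg 1/s$. In particular, for $n\ll 1/s$ the weight $w_n\to\infty$, the spectral gap of $M(w_n)$ closes at rate $\sim 1/\sqrt{w_n}$, and the naive adiabatic estimate is too crude to identify the constant; this boundary regime must be handled by a separate Liouville--Green / turning-point analysis and matched cleanly to the bulk asymptotic. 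Carrying out this matching and extracting a single closed-form value of $C_k$ is, I expect, the most delicate and technically demanding step of the whole argument.
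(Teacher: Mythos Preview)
Your proposal follows essentially the same architecture as the paper: the reduction $\P_s(A_k)=\prod_n(1-q^n)\cdot G_k(q)$, the transfer-matrix representation of $G_k$, the adiabatic/WKB approximation governed by the Perron eigenvalue, Euler--Maclaurin to convert the eigenvalue sum to an integral, and the recognition that the small-$n$ regime (where the spectral gap collapses) is the crux.  Two points of comparison are worth noting.

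First, a small slip: for general $k$ the eigenvalue ratio satisfies $|\lambda_i|/|\lambda_1|=1-\Theta(w^{-1/k})$, so the gap closes like $w^{-1/k}$, not $1/\sqrt{w}$ (your formula is the $k=2$ case).  This matters because it dictates how far the adiabatic regime extends: the paper finds that the WKB approximation becomes reliable only once $n$ exceeds roughly $s^{-1/(k+1)}\log(1/s)^{k/(k+1)}$, and chooses the matching point accordingly.

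Second, the paper handles the boundary layer not by an analytic Liouville--Green or turning-point argument but by a direct combinatorial computation: for small $n$ the ``runs'' between missing parts are almost all of length $k-1$ or $k-2$, and one can sum over the few shortenings explicitly (Section~4), then check that the resulting vector aligns with the Perron eigenvector up to controlled error.  This combinatorial run-up, matched against the eigenvalue product and the change-of-basis corrections on the other side, is what produces the constant $C_k=\sqrt{2\pi}/k$.  Your proposed analytic treatment of the boundary may well work, but the paper's route shows that the matching can be done with bare hands and yields the constant cleanly.  For the integral itself, the paper does not carry out the dilogarithm evaluation you sketch but simply cites the identity $\int_0^\infty \log\mu_k(1/(e^x-1))\,dx=\pi^2(k-1)(k+2)/(6k(k+1))$ from Holroyd--Liggett--Romik.
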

Using the connection with Ramanujan's mock theta functions
Andrews \cite{AndPNAS}
proved the case $k=2$ with $C_2 = \sqrt{\frac{\pi}{2}}$.
Theorem 2 of Holroyd, Liggett, and Romik \cite{HLR} gives
$$ \log\( \P_s(A_k)\) \sim - \frac{\lambda_k}{s}$$
for all $k$. This was later strengthened by  Bringmann and Mahlburg \cite{BM}, who showed that
$$
\exp\( - \frac{\lambda_k}{s}\) \ll_k \P_s(A_k) \ll_k s^{-\frac{2k-1}{2k}} \exp\( - \frac{\lambda_k}{s}\).
$$
We prove the following precise version of Andrews's Conjecture.
\begin{theorem}\label{thm:main}
Andrews's conjecture is true with $C_k = \frac{\sqrt{2\pi}}{k}$.  More specifically,
we have
$$\P_s(A_k) = \frac{\sqrt{2\pi}}{k} s^{-\frac{1}{2}} \exp\( - \frac{\pi^2}{3k(k+1)s}  + O_k\(s^{\frac{1}{2k+3}}\) \).$$
\end{theorem}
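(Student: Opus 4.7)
The plan is to reduce $\P_s(A_k)$ to a $q$-series and then perform a careful saddle-point analysis as $s\to 0$. Writing $q=e^{-s}$, the condition $\sum_i q^i<\infty$ ensures via Borel--Cantelli that $T:=\{i:C_i\text{ fails}\}$ is almost surely finite, and summing the indicator of $A_k$ against the independent Bernoulli law yields
$$\P_s(A_k)=\sum_{\substack{T\subset\N\text{ finite}\\ \text{no }k\text{-consec}}}\prod_{i\in T}q^i\prod_{i\notin T}(1-q^i)=\phi(q)\,D_k(q),$$
where $\phi(q):=\prod_{n\geq 1}(1-q^n)$ and $D_k(q):=\sum_{T\text{ no }k\text{-consec}}\prod_{i\in T}\frac{q^i}{1-q^i}$. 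Combinatorially, $D_k(q)$ generates partitions whose set of distinct parts contains no block of $k$ consecutive integers. The modular transformation of Dedekind's $\eta$ gives the standard asymptotic $\phi(q)=\sqrt{2\pi/s}\,e^{-\pi^2/(6s)}\bigl(1+O(e^{-4\pi^2/s})+O(s)\bigr)$, so Theorem~\ref{thm:main} reduces to showing
$$D_k(q)=\frac{1}{k}\exp\!\left(\frac{\pi^2(k-1)(k+2)}{6k(k+1)\,s}\right)\bigl(1+O_k(s^{1/(2k+3)})\bigr). \qquad(\star)$$

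To prove $(\star)$, I would parametrize each admissible $T$ by its maximal runs of consecutive elements (of lengths $r_j\in\{1,\dots,k-1\}$) separated by gaps (of sizes $g_j\geq 1$), giving the block-product formula
$$\prod_{i\in T}\tfrac{q^i}{1-q^i}=\prod_{j}\frac{q^{r_j s_j+r_j(r_j-1)/2}}{\prod_{\ell=0}^{r_j-1}(1-q^{s_j+\ell})}.$$
The dominant contribution comes from configurations close to one of the $k$ periodic sets $T^{*}_\tau=\{i\leq i^*:i\not\equiv\tau\pmod k\}$, where the cutoff $i^*\sim c_k/s$ is chosen so that the individual factor $q^i/(1-q^i)$ crosses unity. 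I would then (i) apply Euler--Maclaurin to $\log(q^i/(1-q^i))$, evaluating the leading integral via dilogarithm identities (using $\mathrm{Li}_2(1)=\pi^2/6$ and its roots-of-unity companions at the $k$-th roots) to obtain the claimed exponent; (ii) expand about each saddle configuration and integrate Gaussian fluctuations in $(r_j,g_j)$ around $(k-1,1)$; (iii) combine the $k$ translate contributions with the Gaussian prefactor to produce the overall $1/k$ in $(\star)$. Pairing with $\sqrt{2\pi/s}$ from $\phi(q)$ yields the final constant $\sqrt{2\pi}/k\cdot s^{-1/2}$.

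The main obstacle is step (iii), specifically controlling the relative error to $O_k(s^{1/(2k+3)})$. The fractional exponent $1/(2k+3)$ strongly suggests a two-scale optimization: truncating the Taylor expansion of $-\log(1-e^{-is})$ at order $M$ produces a per-block error of order $sM^{2k+2}$ (from the higher derivatives of the phase that couple $k+1$ consecutive blocks), while restricting attention to blocks within an $M$-scale window of the saddle leaves a tail of order $1/M$; balancing gives $M\sim s^{-1/(2k+3)}$ and relative error $s^{1/(2k+3)}$. Making this rigorous---carefully accounting for cross-block interactions in the saddle expansion, lattice-versus-continuum corrections in Euler--Maclaurin, and boundary effects near the cutoff $i^*$---is where I expect the bulk of the technical work to lie.
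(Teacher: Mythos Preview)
Your reduction $\P_s(A_k)=\phi(q)\,D_k(q)$ is correct, and $D_k(q)$ coincides with the paper's $G_k(q)$; the target $(\star)$ is precisely Theorem~\ref{thm:partitionAsymp}. Your route to $(\star)$, however, diverges from the paper's. Rather than a saddle-point expansion around periodic configurations, the paper writes $G_k(q)={\bf e}_1^T\prod_{n\ge1}m(n)\,{\bf e}_1$ for explicit $k\times k$ transfer matrices $m(n)$, diagonalizes each $m(n)$, and treats the product adiabatically: a combinatorial ``run-up'' handles $n\le N$ directly, while for $n>N$ the product is approximated by the product of primary eigenvalues times a correction from the slowly varying eigenbases (the transition matrices $T(n)=A(n+1)^{-1}A(n)$). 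Choosing $N\sim s^{-3/(2k+3)}$ balances the run-up error $s^{2/k}N^{(k+2)/k}$ against the transition error $N^{-(k+1)/k}s^{-1/k}$, and \emph{that} balance---not your ``$sM^{2k+2}$ versus $M^{-1}$'' heuristic---is what produces $s^{1/(2k+3)}$.

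There is a concrete gap at your step~(iii). Summing the contributions of $k$ equivalent saddle configurations $T^*_\tau$ would multiply a single-saddle contribution by $k$, not divide by it; to land on $1/k$ you would need each saddle to contribute a prefactor $k^{-2}$, and you have supplied no mechanism for that. In the paper the constant $1/k$ emerges as $k^{-3/2}\cdot k^{1/2}$: the first factor comes from comparing the run-up $\wt v_0(N)$ to the partial eigenvalue product (Proposition~\ref{prop:transitionComparison}), the second from the infinite product $\prod_{n\ge N}T(n)^{1,1}$ (Theorem~\ref{thm:transitionProduct}). More broadly, the two regimes---near-periodic structure of $T$ for $is\ll 1$ versus a genuinely Markovian, sparse tail for $is\gtrsim 1$---are qualitatively different, and a single Gaussian expansion about a sharp-cutoff periodic set does not capture the latter. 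The transfer-matrix machinery is exactly what lets the paper track this crossover with the required precision; your outline does not yet contain an analogue of it.
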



This theorem is  applicable to enumerating  partitions with no $k$-sequences.
A partition $\lambda$ of $n$ has a $k$-sequence if
there are $k$ parts of consecutive sizes.  Partitions with no $k$-sequences, and further restrictions on the parts that may
occur, appear naturally in a number of partition problems.  Perhaps the first instance is in MacMahon's volume
\cite{macMahon}.  He interprets the combinatorial significance of the
Rogers-Ramanujan identity
\begin{equation}\label{eqn:rogersRamanujan}
\prod_{n=1}^\infty \frac{1}{(1-q^{5n-3})(1-q^{5n-2})} = \sum_{n=0}^\infty \frac{q^{n^2+n}}{(1-q)\cdots (1-q^n)}
\end{equation}
as saying that the number of partitions of $n$ into parts of the form $5n-3$ and $5n-2$ are equinumerous
with the partitions of $n$ into distinct parts with no $2$-sequences and no part of size $1$.

Let $p_{k, r, >B}(n)$ be the number of partitions of $n$ with no $k$-sequence, no part occurring more than
$r$ times, and no parts of size $\le B$.  For simplicity, write $p_k(n) = p_{k, \infty, 0}(n)$.
Then \eqref{eqn:rogersRamanujan} is an identity for the generating function $\sum_{n=0}^\infty p_{2, 1, >1}(n)q^n$.
We have the following partition identities equating generating functions and infinite products
\begin{align*}
\sum_{n=0}^\infty p_{2,2,>1}(n) q^n =& \prod_{n=1}^\infty \frac{1}{(1-q^{6n-2})(1-q^{6n-3})(1-q^{6n-4})} \\
\sum_{n=0}^\infty p_{2,2, >0}(n)  q^n =& \prod_{n=1}^\infty \frac{(1-q^{6n-3})^2 (1-q^{6n})}{(1-q^{n})} \\
\sum_{n=0}^\infty p_{2,\infty,>1}(n) q^n =& \prod_{n=1}^\infty \frac{1}{(1-q^{6n})(1-q^{6n-2})(1-q^{6n-3})(1-q^{6n-4})}
\end{align*}
The first identity is due to Andrews \cite{And67a}, the second identity is due to MacMahon \cite{macMahon}
and the final identity is due to Andrews and Lewis \cite{AL}.  In each of these cases, modular techniques can be applied
to obtain exact formulas for the coefficients.

Moreover, we have
$$\sum_{n=0}^\infty p_2(n) q^n = \prod_{n=1}^\infty \frac{1+q^{3n}}{1-q^{2n}} \cdot \chi(q)$$
where $\chi(q) = \sum_{n=0}^\infty q^{n^2} \prod_{m=1}^n \frac{1+q^m}{1+q^{3m}}$
is one of Ramanujan's mock theta functions.
Bringmann and Mahlburg \cite{BM1}
use this connection with Ramanujan's mock theta function and an extension of the circle method
to prove a nearly exact formula for $p_2(n)$.

See the surveys of Ono \cite{onoSurvey} and \cite{onoCD} for more
applications of mock theta functions. Also, see the work of Knopfmacher and Munagi \cite{KM}
for similar constrained partition problems and connections with modular forms.

While there appear to be many connections  between partitions without sequences
and  modular and mock modular forms,
the general case appears out of reach of modular techniques.
The techniques of this paper can be applied to obtain asymptotics for $p_{k,r,B}(n)$
for any $k, r$ and $B$.
In particular, we have the following theorem for the asymptotic of $p_k(n)$.
\begin{theorem}\label{thm:ksequencePartitions}
As $n\to \infty$ we have
$$p_k(n) \sim
\frac{1}{2 k} \( \frac{1}{6} \( 1- \frac{2}{k(k+1)}\)\)^{\frac{1}{4}} \frac{1}{n^{\frac{3}{4}}}
\exp\( \pi \sqrt{\frac{2}{3} \(1-\frac{2}{k(k+1)}\) n}\)
.$$
\end{theorem}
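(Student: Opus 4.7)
\noindent\textbf{Proof proposal for Theorem \ref{thm:ksequencePartitions}.}
The plan is to pass from the probability-model asymptotic of Theorem~\ref{thm:main} to coefficient asymptotics for $p_k(n)$ via a saddle-point analysis of the ordinary generating function $F_k(q):=\sum_{n\ge 0}p_k(n)q^n$. The key first step is to recognize the probability measure $\P_s$ as the ``$q$-weighted'' measure on ordinary partitions with $q=e^{-s}$: each partition $\lambda$ receives mass $q^{|\lambda|}/P(q)$, where $P(q):=\prod_{n\ge 1}(1-q^n)^{-1}$; the events ``$n$ is not a part of $\lambda$'' are then independent with probability $1-q^n=1-e^{-ns}=\P_s(C_n)$, and the event $A_k$ translates to ``$\lambda$ contains no $k$-sequence.'' This yields the generating-function identity
\[
F_k(e^{-s})=P(e^{-s})\,\P_s(A_k).
\]

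\noindent Combining this identity with Theorem~\ref{thm:main} and the classical modular asymptotic $P(e^{-s})=\sqrt{s/(2\pi)}\exp(\pi^{2}/(6s))(1+O(s))$ yields, as $s\downarrow 0$,
\[
F_k(e^{-s})=\frac{1}{k}\exp\(\frac{\beta_k}{s}+O\(s^{1/(2k+3)}\)\),\qquad \beta_k:=\frac{\pi^{2}}{6}\(1-\frac{2}{k(k+1)}\).
\]
A standard saddle-point computation on $p_k(n)=\frac{1}{2\pi i}\oint F_k(q)q^{-n-1}\,dq$ then proceeds by parameterizing $q=e^{-s}$ and locating the saddle at $s_0=\sqrt{\beta_k/n}$, where the phase function $f(s):=\beta_k/s+ns$ attains its minimum $2\sqrt{\beta_k n}$ with second derivative $f''(s_0)=2n^{3/2}/\sqrt{\beta_k}$. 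Gaussian integration through the saddle produces the leading term $\frac{(1/k)\beta_k^{1/4}}{2\sqrt{\pi}}\,n^{-3/4}e^{2\sqrt{\beta_k n}}$, which a routine algebraic simplification matches to the formula in Theorem~\ref{thm:ksequencePartitions}.

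\noindent The main obstacle is the rigorous justification of the saddle-point step, because Theorem~\ref{thm:main} is established only along the real axis $s>0$. The cleanest route is to invoke an Ingham-style Tauberian theorem, which converts an asymptotic of the form $\sum a_n e^{-ns}\sim Ce^{\beta/s}$ (with $o(1/s)$ error in the exponent) into the corresponding coefficient asymptotic provided the sequence $a_n$ is sufficiently regular; a weak monotonicity property of $p_k(n)$, which should follow from an explicit combinatorial injection, suffices. The alternative is to extend Theorem~\ref{thm:main} to a complex neighborhood of $s=0$, giving a major-arc estimate, and to bound the minor arcs using the trivial inequality $|F_k(q)|\le F_k(|q|)$ together with the inward decay of $F_k(|q|)$. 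Either route absorbs the $O(s^{1/(2k+3)})$ relative error into a $(1+o(1))$ factor in the final asymptotic, producing the precise leading term claimed.
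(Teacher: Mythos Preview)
Your proposal is correct and lands on the same argument the paper uses: the paper derives the generating-function asymptotic $G_k(e^{-s})=k^{-1}\exp(\beta_k/s+o(1))$ (its Theorem~\ref{thm:partitionAsymp}, equivalent to your combination of Theorem~\ref{thm:main} with the $\eta$-asymptotic) and then applies Ingham's Tauberian theorem---your ``cleanest route''---rather than a genuine saddle-point/circle-method argument. The monotonicity you flag as needed is exactly what the paper invokes: it cites Lemma~10 of \cite{HLR} for $p_k(n)\ge p_k(n-1)$, so that $(1-q)G_k(q)$ has nonnegative coefficients and Ingham applies to it, the partial sums telescoping back to $p_k(n)$.
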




In the next section, we sketch the approach taken to proving Theorem \ref{thm:main}.
\section{The Approach}\label{sec:Idea}
In this section, we sketch the proof  of Andrews's conjecture.
\subsection{Setup}
Denote the generating function for $p_k(n)$ by
$$G_k(q):= \sum_{n=0}^\infty p_k(n) q^n.$$
We let $q  = e^{-s}$. In Section 4 of  \cite{HLR} it is shown that
$$\P_s(A_k) = \frac{G_k( q)}{G(q)}$$
where $G(q) = \sum_{n=0}^\infty p(n) q^n = \prod_{n=1}^\infty \frac{1}{1-q^n}$
and $p(n)$ is the number of partitions of $n$.
Since precise asymptotics of $G(q)$ are well known, namely
$$G(q) =  \frac{1}{\sqrt{2\pi}} s^{-\frac{1}{2}} \exp\(\frac{\pi^2}{6s}  - \frac{s}{24}  + O(s^N)\)$$
for any $N$,
determination of the asymptotics of $G_k(q)$ is equivalent to the determination of the asymptotic of $\P_s(A_k)$.
We prove the following theorem.
\begin{theorem}\label{thm:partitionAsymp}
For each $k\ge2 $ we have
$$G_k(e^{-s}) = \frac{1}{k} \exp\( \frac{\pi^2}{6s} \(1-\frac{2}{k(k+1)}\) + O_k\(s^{\frac{1}{2k+3}}\)\)$$
as $s\to 0$.
\end{theorem}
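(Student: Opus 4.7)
The plan is to estimate $G_k(q)$ by decomposing each partition with no $k$-sequence according to its block structure: the support of the multiplicity sequence $(m_1,m_2,\ldots)$ is a disjoint union of maximal runs of consecutive integers of length between $1$ and $k-1$, each separated from the next by at least one missing integer. Summing over positive multiplicities within each run gives
$$G_k(q) = \sum_{L\ge 0} \sum_{\substack{1\le a_1<\cdots<a_L\\ \ell_i\in\{1,\dots,k-1\}\\ a_{i+1}\ge a_i+\ell_i+1}} \prod_{i=1}^L \prod_{j=0}^{\ell_i - 1} \frac{q^{a_i+j}}{1-q^{a_i+j}},$$
where $a_i$ and $\ell_i$ denote the starting position and length of the $i$-th block. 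The problem reduces to an asymptotic analysis of this weighted sum over block configurations.

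After rescaling via $x = sn$, each factor $q^n/(1-q^n) = 1/(e^{sn}-1)$ becomes a smooth function of $x \in (0,\infty)$, and I would apply Euler--Maclaurin to replace the discrete sums over block positions by integrals, converting the block sum into a transfer-operator/saddle-point problem. The leading behavior of $\log G_k$ should equal $s^{-1}$ times the extremum of an entropy-like functional over block-density profiles on $(0,\infty)$; a direct calculation of the extremum should yield $\frac{\pi^2}{6}\bigl(1 - \tfrac{2}{k(k+1)}\bigr)$. The constant $\frac{1}{k}$ I would expect to arise as a residual ``phase'' factor in the saddle-point evaluation: the extremal block configuration has an approximate period-$k$ structure in positions modulo $k$, and the partition function sums $k$ equivalent cyclic translates of this structure, producing a normalizing factor of $1/k$ in the leading coefficient.

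The main obstacle is to make the Euler--Maclaurin and saddle-point approximation quantitative enough to yield the sharp error term $O(s^{1/(2k+3)})$. I would introduce a truncation at $n = s^{-1-\alpha}$ to control the tail (where $q^n$ is doubly exponentially small and the no-$k$-sequence constraint becomes inactive), and then quantify the Euler--Maclaurin error on the retained range via bounds on the first $2k+1$ derivatives of the relevant summand, which is where the factor $2k+3$ should enter after balancing. Optimizing $\alpha$ against the competing truncation and discretization errors produces the exponent $\frac{1}{2k+3}$. An orthogonal technical difficulty is uniformity over the number of blocks $L$, which ranges over $O(1/s)$ values; I would control deviations of the empirical block distribution from the limiting profile by a subgaussian-type estimate derived from the transfer operator. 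As a sanity check I would verify the constant $\frac{1}{k}$ for $k = 2$ against Andrews's $C_2 = \sqrt{\pi/2}$, extracted from the mock theta function $\chi(q)$.
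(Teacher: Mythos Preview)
Your block decomposition is correct and is in fact equivalent to the paper's starting point: the paper encodes the same run structure via the $k\times k$ transfer-matrix product $G_k(q)=\mathbf e_1^T\prod_{n\ge 1}m(n)\,\mathbf e_1$, where the rows of $m(n)$ track the length of the current run. From that common starting point the paper does \emph{not} proceed by a global saddle-point argument. Instead it diagonalizes each $m(n)$ and splits the product at a cutoff $N\asymp s^{-3/(2k+3)}$: for $n\le N$ it computes $\prod_{n\le N}m(n)\mathbf e_1$ by a direct combinatorial expansion of the run structure (your block picture, analyzed to second order), incurring an error $O\bigl(s^{2/k}N^{(k+2)/k}\bigr)$; for $n>N$ it shows the product is dominated by the primary-eigenvalue product $\prod\lambda_1(n)z(n)$ times a product of scalar transition factors $T(n)^{1,1}$, with error $O\bigl(N^{-(k+1)/k}s^{-1/k}\bigr)$. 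The exponent $\tfrac{1}{2k+3}$ is exactly the balance point of these two specific errors. The constant $\tfrac1k$ is \emph{not} a symmetry factor: it arises as $k^{-3/2}\cdot k^{1/2}$, the first from the small-$n$ combinatorics (a root-of-unity orthogonality together with Stirling), the second from an explicit evaluation of $\prod_{n\ge N}T(n)^{1,1}$ via an exact antiderivative.

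Against this, your proposal has two concrete gaps. First, the ``$k$ equivalent cyclic translates'' heuristic for the constant $\tfrac1k$ does not hold up: the extremal block profile is not a periodic object carrying a free $\mathbb Z/k\mathbb Z$ action (the optimal missing positions drift with $n$), and in any saddle-point scheme the prefactor comes from the Hessian of the functional, not from a symmetry count. You would need a genuine second-order fluctuation computation to extract $\tfrac1k$, and nothing in your outline indicates how that goes. Second, your proposed source of the exponent $\tfrac{1}{2k+3}$ --- Euler--Maclaurin with $2k+1$ derivatives, plus a tail truncation at $n=s^{-1-\alpha}$ --- is not where the difficulty lies. The tail $n\gg s^{-1}$ is harmless; the delicate region is $n\ll s^{-1}$, where the $k$ eigenvalues of $m(n)$ are separated only by $\Theta\bigl((ns)^{1/k}\bigr)$ and the primary-eigenvector approximation breaks down. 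The paper's exponent comes from balancing the failure of that approximation for small $n$ against the combinatorial error for moderate $n$; a different method may well yield a different exponent, but you have not identified the competing error sources precisely enough to claim any particular one.
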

\begin{remark}
A slight modification of the arguments presented establish
Theorem \ref{thm:partitionAsymp} with an error that is $o(1)$
for non-real $s$ satisfying  $|\Im (s)| = o\( \Re(s)\)$.
\end{remark}

Numerical calculations lead to the following conjecture for real $s$.
\begin{conjecture} For $s$ real and $s\to 0$
\begin{equation*}
G_k(e^{-s}) = \frac{1}{k} \exp\( \frac{\pi^2}{6s} \(1-\frac{2}{k(k+1)}\) + \sqrt{\frac{2}{9\pi}} s^{\frac{1}{k}} + O\(s^{\frac{2}{k}}\)\)
\end{equation*}
\end{conjecture}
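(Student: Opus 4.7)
The plan is to sharpen the asymptotic analysis behind Theorem~\ref{thm:partitionAsymp} (currently yielding $O_k(s^{1/(2k+3)})$ in the exponent) so as to extract the next term $\sqrt{2/(9\pi)}\,s^{1/k}$ and reduce the remainder to $O(s^{2/k})$. Since $s^{1/(2k+3)} \gg s^{1/k} \gg s^{2/k}$ as $s\downarrow 0$, this is a genuine quantitative improvement rather than a bookkeeping refinement: every error that was absorbed into the $O_k(s^{1/(2k+3)})$ bound must be re-examined and either shown to contribute to the universal constant $\sqrt{2/(9\pi)}$ at order $s^{1/k}$, or bounded by $O(s^{2/k})$.

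The error in Theorem~\ref{thm:partitionAsymp} arises from balancing a truncation error against a polynomial combinatorial error, which drives the optimal truncation to scale $N \asymp s^{-1/(2k+3)}$. For the refined conjecture I would instead decompose $G_k(e^{-s})$ at the scale $N \asymp s^{-1/k}$, which is precisely where parts become dilute enough that the $k$-sequence constraint transitions from a combinatorial to a probabilistic regime. The small-part factor (parts of size $n \le N$) would be handled by an exact enumeration of run-length and gap configurations, while the bulk factor ($n > N$) would reuse and tighten the saddle-point machinery already developed. After the rescaling $x = n\,s^{1/k}$ in the small-part sum, Euler--Maclaurin should produce a $k$-independent limiting integral whose value is $\sqrt{2/(9\pi)}$. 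The fact that the conjectural constant does not depend on $k$ essentially forces such a universality statement, and provides a strong consistency check.

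The main obstacle is uniform control of every intermediate error below the $O(s^{2/k})$ threshold. Curvature corrections at the saddle, the matching region at $n \asymp s^{-1/k}$, and higher-order Euler--Maclaurin terms each are individually of borderline size and must be tracked explicitly rather than absorbed into a remainder. The restriction to real $s$ in the conjecture, contrast with the remark after Theorem~\ref{thm:partitionAsymp} where complex $s$ with $|\Im(s)| = o(\Re(s))$ is permitted at the weaker error, is a warning sign: it signals that the $s^{1/k}$ correction is sensitive to oscillations that survive on the real axis but are smeared out by slight complex deformations, so any contour manipulation in the analysis must preserve the real-analytic structure to this order. Extracting the clean local model that yields the universal constant $\sqrt{2/(9\pi)}$ is, in my view, the decisive step; once that model is isolated, the remaining estimates should reduce to a careful but routine Euler--Maclaurin expansion against the already-known leading asymptotic.
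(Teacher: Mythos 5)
This statement is not proved in the paper: it is stated as a \emph{conjecture} supported only by numerical calculations, with a remark that the $k=2$ case follows from the external work of Bringmann and Mahlburg~\cite{BM1}. There is therefore no proof in the paper for your attempt to be compared against, and your write-up, which is an outline of a research plan rather than an argument, cannot be judged as a reconstruction of the authors' reasoning.

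Taken on its own terms, the plan has concrete gaps that go beyond unfinished bookkeeping. The central claim, that after rescaling $x = n\,s^{1/k}$ an Euler--Maclaurin expansion of the small-part factor produces a $k$-independent integral equal to $\sqrt{2/(9\pi)}$, is asserted rather than derived; ``the fact that the constant is $k$-independent forces a universality statement'' is circular, since that independence is precisely what is conjectured and has only been verified numerically. More seriously, the proposed cut at $N \asymp s^{-1/k}$ does not repair the known error terms. The paper's remainder after combining Theorems~\ref{thm:runup}, \ref{prop:transitionapproximation}, \ref{thm:transitionProduct}, and~\ref{thm:eigenvalueProduct} is of the shape $O\bigl(N^{-(k+1)/k}s^{-1/k} + sN^2 + s^{2/k}N^{(k+2)/k} + N^{-1}\bigr)$; substituting $N = s^{-1/k}$ gives, for instance, $N^{-(k+1)/k}s^{-1/k} = s^{1/k^2}$ and $s^{2/k}N^{(k+2)/k} = s^{(k-2)/k^2}$, both of which are of order larger than the target $O(s^{2/k})$ (and $sN^2$ fails to be small at all for $k=2$). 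So simply relocating the cut does not get below the desired threshold, and the error analysis would have to be restructured, not merely re-optimized. Finally, the observation that the conjecture is restricted to real $s$ is read as a ``sensitivity to oscillations,'' but no mechanism is identified; absent an identified source of the $s^{1/k}$ term, this remains a heuristic. In short, the statement is genuinely open (except $k=2$), and the proposal does not close it.
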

The results of Bringmann and Mahlburg \cite{BM1} prove this in the case $k=2$.
This conjecture would imply that for $k>2$ the generating function $G_k(q)$ is not a usual modular form.   Indeed,
if $G_k(q)$ is a half integral weight modular form or
mixed mock modular form, we would expect
an asymptotic expansion that contains only powers of $s^{\frac{1}{2}}$.

\subsection{Method of Computation}

We use a recursion to calculate the generating function $G_k(q)$.
Let $G_{k,N}(q)$ be the generating function for the number of partitions of $n$
with parts $<N$ and no $k$-sequence.
For $i = 0, 1, 2, \cdots, k-1$ we define
\begin{equation}
\wt{v}_i^k(N) := \sum_{\begin{subarray}{c} \lambda \text{ with parts } \le N \\
\text{ no $k$ consecutive parts} \\ \lambda \text{ has parts of size $N, N-1, \cdots, N-i+1$} \\ \lambda  \text{ has no part of size $N-i$}\end{subarray}} q^{\abs{\lambda}}.\end{equation}
We have the following recursion
$$\( \begin{array}{c} \wt{v}_0^k(N) \\ \wt{v}_1^k(N) \\ \vdots \\  \wt{v}_{k-1}^k(N) \end{array}\)
 = \( \begin{array}{cccc} 1 & 1 & \cdots & 1 \\ z(N) & 0 & \cdots & 0 \\ 0 & z(N) & \cdots & 0 \\
 0 & & \vdots & 0 \\ 0 & \cdots & z(N) & 0 \end{array}\)
 \( \begin{array}{c} \wt{v}_0^k(N-1) \\ \wt{v}_1^k(N-1) \\ \vdots \\  \wt{v}_{k-1}^k(N-1) \end{array}\)
 $$
where $z = z(n) := \frac{q^n}{1-q^n}$.  For convenience set
\begin{equation}\label{eqn:mDefinition}
m(n):= \( \begin{array}{cccc} 1 & 1 & \cdots & 1 \\ z(n) & 0 & \cdots & 0 \\ 0 & z(n) & \cdots & 0 \\
 0 & & \vdots & 0 \\ 0 & \cdots & z(n) & 0 \end{array}\).
\end{equation}
Therefore, we have
$$G_{k,N}(q) = \wt{v}_0^k(N) = {\bf e}_1^T \prod_{n=1}^{N} m(n) {\bf e}_1$$
where ${\bf e}_1 = \(\begin{array}{c} 1 \\ 0 \\ \vdots \\ 0\end{array}\).$
So we have $G_k(q) = \lim_{N\to \infty} G_{k,N}(q)$.

 Our main idea for evaluating this quantity is as follows.  If the $m(n)$ were simultaneously diagonalizeable, the product would be easy to evaluate and $G_k(q)$ would be approximately equal to the product of the largest eigenvalues.  This is not the case, but fortunately, the eigenvectors of the $m(n)$ vary slowly with $n$.  We diagonalize each of the $m(n)$ in order to approximate the matrix product in question.  The main term in our approximation is
equal to the product of the largest eigenvectors of the $m(n)$, but we also have a correction term due to the changes in eigenbasis.

We note that this technique is similar to the adiabatic approximation
in quantum mechanics (see, for example,  Chapter 10 of \cite{griffiths}).  In
each case, we are sequentially applying a sequence of slowly-varying
matrices to a given initial vector (though in the adiabatic process,
this is done continuously rather than discretely).  In each case, we
write our vectors in terms of the (slowly changing) eigenbasis.  The
final outcome is approximated by taking the product (or integral) of
the eigenvalues, with a correction term due to the change of basis
(known as Berry's phase in the case of quantum mechanics).  The
justifications for this approximation are different, for while the
adiabatic approximation holds due to cancelation of cross terms due
to rapid oscillation, in our case the approximation holds because the
contribution from the non-primary eigenvectors may be safely
neglected

For small $n$ the main eigenvector is not a good approximation for the contribution to the generating function.
In fact, the non-primary eigenvalues contribute to the asymptotic approximation.
We may interpret this on the level of partitions.
Fristedt's \cite{fristedt} proved that for large $n$ the smallest parts of a partition are independent, while the large parts
are related via a Markov process.  Roughly speaking, the eigenvalues of $m(n)$ encode the markovity. Therefore, we
use a direct approximation to analyze the small parts of a partition without $k$-sequences.

We begin with some preliminary calculations of the matrices $m(n)$ in Section \ref{sec:Diagonalization}.
In Section \ref{sec:Early},
we give a direct computation for the generating functions $\wt{v}_i^k(N)$ for $N$ of size
$s^{-\frac{1}{k+1} - \epsilon}$ .  In Section \ref{sec:Late}, we calculate the contribution to $G_k(q)$ from
$\prod_{n>N} m(n)$.
In Section \ref{sec:Eigenvalues}, we estimate the product over the largest eigenvalues.
In Section \ref{sec:NonReal},
we deduce Theorem \ref{thm:partitionAsymp} and thus Theorem \ref{thm:main}.
In Section \ref{sec:PartitionAsymp} we give the proof of Theorem \ref{thm:ksequencePartitions}.

\section{Calculations on the Diagonalization of $m(n)$}\label{sec:Diagonalization}
In this section, we collect some results on the eigenvalues and diagonalization of the matrices $m(n)$.
In this section, $k$ is fixed and $s$ is assumed to be small.
Errors are often written in big-$O$ notation. In almost all
cases the constants depend on $k$. We often suppress this dependence inside of the proofs.

Observe that
the characteristic polynomial of $\frac{1}{z(n)} m(n)$ is
$$\lambda^k - z(n)^{-1} \(\lambda^{k-1} +\cdots +\lambda+ 1\).$$
We begin by proving some basic results about the sizes of the eigenvalues of this polynomial when $z$
is either very big or very small.

\begin{lemma}\label{lem:primaryEigenvalueBound}
For $z\in\R$, let $\lambda_i$ be the roots of
$\lambda^k - z^{-1} \(\lambda^{k-1} + \cdots \lambda+1\) = 0$.
Then for $z$ large,
\begin{align*}
\lambda_i(z) =& \omega_iz^{-1/k} \( 1 + \frac{\omega_i}{k}z^{-1/k} + O_k\( z^{-\frac{2}{k}}\)\)\\
\end{align*}
where the $\omega_i$ are the distinct $k^{th}$ roots of unity.  Furthermore, for $z$ small one root satisfies
$$
\lambda_i = z^{-1}(1+O_k(z)),
$$
and all other roots satisfy
$$
\lambda_i = \omega_i(1+O_k(z)),
$$
where the $\omega_i$ here are distinct $k^{th}$ roots of unity other than 1.
\end{lemma}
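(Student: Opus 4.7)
The plan is to view both statements as root asymptotics for the polynomial $P(\lambda; z) := \lambda^k - z^{-1}(\lambda^{k-1} + \cdots + \lambda + 1)$, equivalently (after clearing denominators and using $\lambda^{k-1} + \cdots + 1 = (\lambda^k - 1)/(\lambda - 1)$) the nontrivial roots of
$$F(\lambda; z) := z\lambda^{k+1} - (z+1)\lambda^k + 1 = (\lambda - 1)\bigl(z\lambda^k - (\lambda^{k-1} + \cdots + 1)\bigr).$$
In each regime my strategy is the same: isolate a good leading approximation, change variables to make the equation analytic and nondegenerate at a known root, and invoke the analytic implicit function theorem to produce analytic branches whose Taylor coefficients can be read off by matching.

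For the large-$z$ regime I would substitute $\lambda = \omega \eta \mu$ with $\eta := z^{-1/k}$ and $\omega$ any fixed $k^{th}$ root of unity. Dividing through by $\eta^k$ and using $\omega^k = 1$, the equation $P(\lambda; z) = 0$ becomes
$$\mu^k = 1 + \omega \eta \mu + \omega^2 \eta^2 \mu^2 + \cdots + \omega^{k-1} \eta^{k-1} \mu^{k-1}.$$
At $\eta = 0$ this reads $\mu^k = 1$, with $\mu = 1$ a simple root, so the analytic implicit function theorem yields a unique analytic branch $\mu(\eta) = 1 + a_1 \eta + a_2 \eta^2 + \cdots$ near $\eta = 0$. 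Matching the coefficient of $\eta$ gives $k a_1 = \omega$, hence $a_1 = \omega/k$, and Taylor's theorem bounds the remainder by $O_k(\eta^2)$. Since the $k$ choices of $\omega$ produce $k$ distinct leading behaviours $\omega z^{-1/k}$, these branches are pairwise distinct and exhaust the roots.

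For the small-$z$ regime I would treat the finite and escaping branches separately. For the $k-1$ finite roots, apply the analytic IFT directly to $G(\lambda, z) := z\lambda^k - (\lambda^{k-1} + \cdots + 1)$ at each simple root $\omega \neq 1$ of $G(\cdot, 0) = -(\lambda^{k-1} + \cdots + 1)$; nondegeneracy $\partial_\lambda G(\omega, 0) \neq 0$ follows by differentiating the identity $(\lambda^{k-1} + \cdots + 1)(\lambda - 1) = \lambda^k - 1$ and plugging in $\lambda = \omega$, which yields an analytic branch $\lambda(z) = \omega + O(z) = \omega(1 + O(z))$ for each such $\omega$. For the remaining root, which escapes to infinity as $z \to 0$, pass to $\mu := 1/\lambda$; the equation becomes
$$z = \mu + \mu^2 + \cdots + \mu^k,$$
whose solution $\mu(0) = 0$ is simple (derivative $1$ at $\mu = 0$), so the IFT gives $\mu(z) = z + O(z^2)$ and hence $\lambda(z) = z^{-1}(1 + O(z))$.

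The only subtlety I foresee is bookkeeping: verifying that the $k-1$ finite branches together with the escaping branch exhaust all $k$ roots in the small-$z$ regime, and that the $k$ large-$z$ branches are pairwise distinct. Both follow from the degree count ($P$ has exactly $k$ roots) combined with the distinctness of leading orders of the branches produced. I do not anticipate any serious analytic obstacle; the argument amounts to a sequence of careful applications of the analytic implicit function theorem together with explicit computation of the first nontrivial Taylor coefficient by matching.
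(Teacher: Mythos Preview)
Your argument is correct. The paper takes a different, more bare-hands route: rather than changing variables and invoking the analytic implicit function theorem, it directly estimates the polynomial $p(\lambda)=\lambda^k-z^{-1}(\lambda^{k-1}+\cdots+1)$ and its derivatives at the putative root. For large $z$ it checks that $p(z^{-1/k}\omega)=O(z^{-(k+1)/k})$ while $|p'(z^{-1/k}\omega)|=\Theta(z^{-(k-1)/k})$ and the higher derivatives are controlled, which via a Newton/Rouch\'e-type localization forces a root within $O(z^{-2/k})$ of $z^{-1/k}\omega$; the refined coefficient $\omega/k$ is then obtained by substituting this first approximation back into $\lambda^k=z^{-1}(1+\lambda+O(z^{-2/k}))$. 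The small-$z$ cases are handled by the same recipe at $z^{-1}$ and at the nontrivial $k^{\text{th}}$ roots of unity. Your IFT approach is cleaner and yields analyticity of the branches and the Taylor coefficient $\omega/k$ in one stroke, at the cost of invoking a slightly heavier tool; the paper's approach is more elementary and self-contained but requires a two-step bootstrap to get the second-order term. Both lead to the same conclusions with the same error orders.
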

\begin{proof}
For the first statement, note that we only need to show this for
$z\gg 1$.
We claim that $p(\lambda)=\lambda^k - z^{-1}(\lambda^{k-1}+\ldots+1)$ has a root within
$O(z^{-2/k})$ of $z^{-1/k}\omega$ for every $k^{th}$ root of unity $\omega$.  This follows easily noting that $p(z^{-1/k}\omega) = O(z^{-(k+1)/k})$, $|p'(z^{-1/k}\omega)| = \Theta(z^{-(k-1)/k})$ and that $|p^{(\ell)}(z^{-1/k}\omega)| = O(z^{-(k-\ell)/k}).$
This gives $\lambda_i = \omega_i z^{-\frac{1}{k}}\( 1+ O(z^{-\frac{1}{k}})\)$. The stronger claim follows from
$$\lambda_i^{k} = z^{-1}\(1+\lambda_i + O\(z^{\frac{2}{k}}\)\).$$

For the later two claims, we note that it suffices to consider $z\ll 1$.  For the second claim we note that $|p(z^{-1})| = O(z^{-k+1})$, $|p'(z^{-1})|=\Theta(z^{-k+1})$ and $|p^{(\ell)}(z^{-1})| = O(z^{-k+\ell})$.  For the final claim, note that if $\omega$ is a root of $x^{k-1}+\ldots+1$ that $|p(\omega)|=O(1)$, $|p'(\omega)| = \Theta(z^{-1})$ and $|p^{(\ell)}(\omega)| = O(z^{-1})$.
\end{proof}

\begin{lemma}\label{lem:distinct}
For every positive real $z$,
the polynomial $\lambda^{k} - z^{-1} \( \lambda^{k-1} + \cdots + \lambda + 1\)$ has no repeated roots.
\end{lemma}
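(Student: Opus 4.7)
The plan is to study the polynomial $p(\lambda) := \lambda^k - z^{-1}(\lambda^{k-1} + \cdots + 1)$ through the auxiliary polynomial
$$q(\lambda) := z(\lambda - 1)p(\lambda) = z\lambda^{k+1} - (z+1)\lambda^k + 1,$$
whose derivative factors very cleanly. Multiplication by $z(\lambda-1)$ preserves repeated roots: if $p(\lambda_0) = p'(\lambda_0) = 0$, then immediately $q(\lambda_0) = q'(\lambda_0) = 0$. So it suffices to locate the possible repeated roots of $q$ for $z>0$ and then discard those accounted for by the spurious factor $(\lambda-1)$.

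First I would compute
$$q'(\lambda) = \lambda^{k-1}\bigl[(k+1)z\lambda - k(z+1)\bigr],$$
whose only roots are $\lambda = 0$ and $\lambda_0 := \tfrac{k(z+1)}{(k+1)z}$. Since $q(0) = 1 \neq 0$, any repeated root of $q$ (hence of $p$) must equal $\lambda_0$. Second, I would substitute $\lambda_0$ into $q$; using the clean identity $z\lambda_0 - (z+1) = -\tfrac{z+1}{k+1}$, one gets
$$q(\lambda_0) = 1 - \frac{z+1}{k+1}\lambda_0^k = 1 - \frac{k^k (z+1)^{k+1}}{(k+1)^{k+1}z^k}.$$
Hence $q(\lambda_0) = 0$ is equivalent to $f(z) = (k+1)^{k+1}/k^k$, where $f(z) := (z+1)^{k+1}/z^k$.

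Third, I would analyze $f$ on $(0,\infty)$. A direct computation gives
$$f'(z) = \frac{(z+1)^k(z-k)}{z^{k+1}},$$
so $f$ has a unique minimum at $z = k$, with $f(k) = (k+1)^{k+1}/k^k$. Consequently $q(\lambda_0) = 0$ forces $z = k$, at which $\lambda_0 = 1$.

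Finally, I would confirm that the case $z = k$, $\lambda_0 = 1$ is spurious: the double root of $q$ at $\lambda = 1$ is entirely explained by the factor $(\lambda - 1)$, since $p(1) = 1 - k/z$ vanishes at $z = k$ while $p'(1) = k - z^{-1}\binom{k}{2}$ equals $(k+1)/2 \neq 0$ there, so $\lambda = 1$ is only a simple root of $p$ at $z = k$. No serious obstacle is expected; the key observation is that multiplying by $(\lambda - 1)$ replaces an unwieldy derivative by one with a single nontrivial critical point, after which everything reduces to a one-variable optimization on $(0,\infty)$.
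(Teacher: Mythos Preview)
Your argument is correct and follows essentially the same approach as the paper: both multiply by $(\lambda - 1)$ to obtain a polynomial whose derivative has the single nontrivial root $\lambda_0 = k(1+z^{-1})/(k+1)$. Where you compute $q(\lambda_0)$ explicitly and solve for $z$, the paper instead observes that $\lambda_0$ is positive real and that the characteristic polynomial has a unique simple positive real root (since $\lambda \mapsto \sum_{j=1}^k \lambda^{-j}$ is strictly decreasing on $(0,\infty)$), giving a slightly quicker finish.
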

\begin{proof}
Note that if $\lambda$ is a double root of the characteristic polynomial then
it satisfies $x^{k+1} - (1+z^{-1}) x^k + z^{-1} = 0$ and it is a root of the derivative
$\((k+1) x - (1+z^{-1}) k\) x^{k-1}$. Since $x= 0$ is clearly not a solution we have
that the double root is $\lambda = k (1+ z^{-1})/(k+1)$.  On the other hand, it is clear from the form of the characteristic polynomial, that there is a unique, non-repeated positive real root.

\end{proof}

\begin{definition}
By Lemma \ref{lem:distinct}, the roots of $\lambda^{k} - z(n)^{-1} \( \lambda^{k-1} + \cdots + \lambda + 1\)$ are distinct for any $n$
and $s$.
Therefore, the eigenvalues can be analytically continued to functions of $n\in \R^+$.
By Lemma \ref{lem:primaryEigenvalueBound}, as $s\rightarrow 0$, the various eigenvalues are asymptotic to
$e^{ \frac{2\pi i j}{ k}} z^{-\frac1k}$.  We let $\lambda_j(n)$ denote the root whose analytic continuation is asymptotic to
$
e^{2\pi i \frac{(j-1)}{k}}z^{-\frac{1}{k}}.
$
Thus $\lambda_1(n)$ is the unique positive real root of this polynomial.
We note that $\lambda_j(n)z(n)$ are the eigenvalues of $m(n)$ and we call $\lambda_1(n)z(n)$ the \emph{primary eigenvalue} of the matrix $m(n)$.
\end{definition}

Since there are no repeated roots of the characteristic polynomial
of $m(n)$ for each eigenvalue $z\lambda_j = z(n)\lambda_j(n)$ of $m(n)$
we have the eigenvector
$V_n^j:= \(\begin{array}{c} 1 \\ \lambda_j^{-1} \\ \vdots  \\ \lambda_j^{-k+1} \end{array}\).$
So we have
\begin{equation}
m(n) = A(n)D(n) A(n)^{-1}
\end{equation}
with
\begin{equation}
D = D(n) = \(\begin{array}{cccc}
z\lambda_1 & 0 & \cdots &0 \\
0 & z\lambda_2 & \cdots & 0 \\
 & & \vdots & \\
 0 & 0 & \cdots & z\lambda_k
 \end{array}\)
\end{equation}
and
\begin{equation}
A = A(n) = \(\begin{array}{cccc}
 1 & 1 & \cdots & 1 \\
 \lambda_1^{-1} & \lambda_2^{-1}& \cdots & \lambda_k^{-1} \\
  & \vdots & \vdots & \\
  \lambda_1^{-k+1} & \lambda_2^{-k+1} & \cdots & \lambda_k^{-k+1}
  \end{array}\).
  \end{equation}

Next we turn to the transition matrices $A(n+1)^{-1} A(n)$.
\begin{lemma}\label{lem:transitionmatrix}
Let $\lambda_i = \lambda_i(n+1)$ and $\mu_i  = \lambda_i(n)$, then
 $A(n+1) = \( \lambda_{j}^{1-i}\)_{i,j}$ and $A(n) = \( \mu_{j}^{1-i} \)_{i,j} $
 and
 \begin{equation}
 T(n)= (T(n)^{i,j})_{i,j} := A(n+1)^{-1} A(n) = \(
 \prod_{m \neq i} \( \frac{\mu_j-\lambda_m}{\lambda_i-\lambda_m} \cdot \frac{\lambda_i}{\mu_j}\)\)_{i,j}
 \end{equation}
 where $i$ indexes the row and $j$ indexes the column of $T(n)$.
\end{lemma}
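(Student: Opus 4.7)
The descriptions $A(n+1) = (\lambda_j^{1-i})_{i,j}$ and $A(n) = (\mu_j^{1-i})_{i,j}$ are immediate from the definition of $A(n)$ in the earlier display, together with the identifications $\lambda_i = \lambda_i(n+1)$ and $\mu_i = \lambda_i(n)$. The real content of the lemma is the explicit formula for $T(n)$, and my plan is to extract it by recognizing a Vandermonde system and inverting it via Lagrange interpolation.

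Set $\alpha_i := \lambda_i^{-1}$ and $\beta_i := \mu_i^{-1}$. Then $A(n+1)_{ij} = \alpha_j^{i-1}$ and $A(n)_{ij} = \beta_j^{i-1}$, so both matrices are Vandermonde. To compute the $j$-th column $\mathbf{t}^{(j)}$ of $T(n) = A(n+1)^{-1} A(n)$, I would solve $A(n+1) \mathbf{t}^{(j)} = A(n) \mathbf{e}_j$, that is, find scalars $t_1^{(j)}, \dots, t_k^{(j)}$ satisfying
$$\sum_{i=1}^{k} t_i^{(j)}\, \alpha_i^{r-1} = \beta_j^{r-1}, \qquad r = 1, \dots, k.$$

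The key tool is Lagrange interpolation in its dual form: since Lemma \ref{lem:distinct} guarantees the $\alpha_i$ are pairwise distinct, every polynomial $p$ of degree at most $k-1$ satisfies
$$p(\beta_j) = \sum_{i=1}^{k} p(\alpha_i)\, L_i(\beta_j), \qquad L_i(x) := \prod_{m \neq i} \frac{x - \alpha_m}{\alpha_i - \alpha_m}.$$
Specializing to $p(x) = x^{r-1}$ for each $r = 1, \dots, k$ yields $\beta_j^{r-1} = \sum_i \alpha_i^{r-1} L_i(\beta_j)$, and uniqueness of the solution to the Vandermonde system (its determinant being nonzero by distinctness) forces $t_i^{(j)} = L_i(\beta_j)$.

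It remains to translate this back to the $\lambda$'s and $\mu$'s. Substituting $\alpha_m = \lambda_m^{-1}$ and $\beta_j = \mu_j^{-1}$, each factor becomes
$$\frac{\beta_j - \alpha_m}{\alpha_i - \alpha_m} = \frac{\mu_j^{-1} - \lambda_m^{-1}}{\lambda_i^{-1} - \lambda_m^{-1}} = \frac{(\lambda_m - \mu_j)/(\mu_j \lambda_m)}{(\lambda_m - \lambda_i)/(\lambda_i \lambda_m)} = \frac{\mu_j - \lambda_m}{\lambda_i - \lambda_m} \cdot \frac{\lambda_i}{\mu_j},$$
which multiplied over $m \neq i$ gives the stated expression for $T(n)^{i,j}$. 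The whole argument is essentially a bookkeeping exercise: the only step requiring a little care is keeping track of the sign reversals and common factors $\lambda_m$ introduced by passing between $\lambda$ and $\lambda^{-1}$, and nothing beyond the distinctness of eigenvalues (already in Lemma \ref{lem:distinct}) is needed, so I do not anticipate a genuine obstacle.
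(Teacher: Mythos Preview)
Your proposal is correct and follows essentially the same approach as the paper: both arguments recognize the Vandermonde structure in the reciprocals $\lambda_i^{-1}$ and $\mu_j^{-1}$, identify the $(i,j)$ entry of $T(n)$ as the value at $\mu_j^{-1}$ of the Lagrange basis polynomial $L_i(x)=\prod_{m\neq i}\frac{x-\lambda_m^{-1}}{\lambda_i^{-1}-\lambda_m^{-1}}$, and then clear denominators to obtain the stated formula. The only cosmetic difference is that the paper transposes first and reads off the entry as ${\bf e}_j^T A(n)^T (A(n+1)^{-1})^T {\bf e}_i$, whereas you solve the column system $A(n+1)\mathbf{t}^{(j)}=A(n)\mathbf{e}_j$ directly; these are the same computation.
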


 \begin{proof}
Note
 $$
 (A(n+1)^{-1} A(n))^T = A(n)^T (A(n+1)^{-1})^T.
 $$
 Furthermore,
 $$
 A(n)^T \left( \begin{matrix} a_0 \\ a_1 \\ \vdots \\ a_{k-1} \end{matrix} \right)
   = \left( \begin{matrix} p(\mu_1^{-1}) \\ p(\mu_2^{-1}) \\ \vdots \\ p(\mu_{k-1}^{-1}) \end{matrix} \right)
 $$
 where $p(x) = a_0 + a_1 x + \ldots + a_{k-1} x^{k-1}$.  Similarly,
$$
 A(n+1)^T \left( \begin{matrix} a_0 \\ a_1 \\ \vdots \\ a_{k-1} \end{matrix} \right)
  = \left( \begin{matrix} p(\lambda_1^{-1}) \\ p(\lambda_2^{-1}) \\ \vdots \\ p(\lambda_{k-1}^{-1}) \end{matrix} \right).
 $$

Therefore, the $(i,j)$ entry of $A(n+1)^{-1}A(n)$ is
$$
{\bf e}_j^T A(n)^T (A(n+1)^{-1})^T {\bf e}_i
$$
where ${\bf e}_i$ is the vector with a $1$ in the $i$th position and zeroes in all others.
This, in turn, is the value at $\lambda_j^{-1}$ of unique degree $(k-1)$ polynomial $p(x)$ so that
$p(\lambda_\ell^{-1}) = \delta_{\ell,i}$.  Therefore,
$$
p(x) = \prod_{m \neq i} \frac{x-\lambda_m^{-1}}{\lambda_i^{-1}-\lambda_m^{-1}}.
$$
Thus the $(i,j)$ entry is
$$
p(\mu_j^{-1}) = \prod_{m \neq i} \frac{\mu_j^{-1}-\lambda_m^{-1}}{\lambda_i^{-1}-\lambda_m^{-1}}
= \prod_{m \neq i}\left(\left(\frac{\mu_j-\lambda_m}{\lambda_i-\lambda_m}\right)\left(\frac{\lambda_i}{\mu_j}\right)\right).
$$
 \end{proof}
We will require some lemmas when dealing with transition matrices.

\begin{lemma}\label{lem:bounded1}
If $\lambda_1, \cdots, \lambda_k$ are the roots of $\lambda^k - z^{-1}(\lambda^{k-1} + \cdots + \lambda+1)=0$
then we have
$$\abs{\lambda_i - \lambda_j} \gg_k \abs{\lambda_j}.$$
\end{lemma}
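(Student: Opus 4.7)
The plan is to split the range of $z>0$ into three regimes: $z$ large, $z$ small, and $z$ in a fixed compact subinterval of $(0,\infty)$. In each regime I will separately show that $|\lambda_i-\lambda_j|/|\lambda_j|$ is bounded below by a positive constant depending only on $k$, and then combine the bounds.

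For $z$ large, I would invoke the first half of Lemma \ref{lem:primaryEigenvalueBound}: all $\lambda_i$ take the form $\omega_i z^{-1/k}(1+O(z^{-1/k}))$ for distinct $k$-th roots of unity $\omega_i$. In particular $|\lambda_j|=z^{-1/k}(1+O(z^{-1/k}))$, while
\[
|\lambda_i-\lambda_j|=z^{-1/k}|\omega_i-\omega_j|+O(z^{-2/k})\geq \bigl(2\sin(\pi/k)-O(z^{-1/k})\bigr)z^{-1/k},
\]
since the minimum separation between distinct $k$-th roots of unity is $2\sin(\pi/k)$. For $z$ larger than some threshold $M_k$, this forces $|\lambda_i-\lambda_j|/|\lambda_j|\geq \sin(\pi/k)$.

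For $z$ small, the second half of Lemma \ref{lem:primaryEigenvalueBound} gives one root of size $z^{-1}(1+O(z))$ and the other $k-1$ roots near the distinct non-unity $k$-th roots of unity. If exactly one of $\lambda_i,\lambda_j$ is the large root, then the difference is of order $z^{-1}$ while $|\lambda_j|$ is either of order $z^{-1}$ or of order $1$, so the ratio is bounded below. If neither is the large root, then both are within $O(z)$ of non-unity $k$-th roots of unity, and the argument from the large-$z$ case (applied to these separated targets) again gives a lower bound. Thus there is a threshold $\mu_k$ such that the claim holds uniformly for $0<z\leq \mu_k$.

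For the remaining compact regime $z\in[\mu_k,M_k]$, I would use Lemma \ref{lem:distinct}: the roots are distinct for every such $z$, and the roots of a monic polynomial vary continuously with its coefficients. Hence the function
\[
f(z)=\min_{i\neq j}\frac{|\lambda_i(z)-\lambda_j(z)|}{|\lambda_j(z)|}
\]
is continuous and strictly positive on the compact set $[\mu_k,M_k]$, so it attains a positive minimum there. Taking the minimum of the three lower bounds yields the desired uniform constant. The only delicate point is verifying that the threshold values $M_k$ and $\mu_k$ can be chosen so that Lemma \ref{lem:primaryEigenvalueBound}'s asymptotics really dominate the error terms; this is routine since the error $O(z^{-1/k})$ (resp.\ $O(z)$) becomes arbitrarily small, so once smaller than $\sin(\pi/k)$ the above inequalities are strict.
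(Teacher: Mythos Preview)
Your proposal is correct and follows essentially the same approach as the paper's proof: split into the three regimes $z$ large, $z$ small, and $z$ in a compact interval, invoke Lemma~\ref{lem:primaryEigenvalueBound} at the two extremes, and use Lemma~\ref{lem:distinct} together with compactness in the middle. Your write-up is in fact slightly more careful than the paper's, for instance in making the separation constant $\sin(\pi/k)$ explicit and in distinguishing whether the large root is $\lambda_j$ or $\lambda_i$ in the small-$z$ case.
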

\begin{proof}
By Lemma \ref{lem:primaryEigenvalueBound}
for $|z|\gg 1$, the $\lambda_i$ are proportional to distinct $k^{th}$ roots of unity, and thus the result follows for $z>C$ for some constant $C$.

By Lemma \ref{lem:primaryEigenvalueBound}
for $|z|\ll 1$, all but $\lambda_1$, are near distinct $k^{th}$ roots of unity, and $\lambda_1$ is roughly $z^{-1}$.  Thus if $i=1$ or $j=1$, then $|\lambda_i-\lambda_j| \gg z^{-1} \gg \abs{\lambda_j}$.  Otherwise, $|\lambda_i-\lambda_j|\gg 1 \gg \abs{\lambda_j}$.
Thus the result holds for $z<c$ for some constant $c$.

For $c\leq z \leq C$, we note that
$
\frac{\lambda_j}{\lambda_i-\lambda_j}
$
is a continuous function of $z$, and thus has some absolute upper bound.  Thus the Lemma holds in this range as well.
\end{proof}

\begin{lemma}\label{lem:bound2} In the notation of Lemma \ref{lem:transitionmatrix},
for any $i$ and $n$ we have $|\mu_i-\lambda_i| = O_k(|\lambda_i|(s+n^{-1}))$. Moreover, we have
$$\frac{\partial}{\partial z} \lambda_1(z) \ll  \lambda_1 (z) \(1 + \frac{1}{z}\)
\ \text{ and  } \ \  \frac{\partial^2}{\partial z^2} \lambda_1(z) \ll  \lambda_1(z) \( 1+ \frac{1}{z}\)^2
.$$
\end{lemma}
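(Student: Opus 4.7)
The plan is to deduce both statements by implicit differentiation of the characteristic polynomial $p(\lambda) = \lambda^k - z^{-1}(\lambda^{k-1} + \cdots + 1)$, combined with the eigenvalue separation bound of Lemma~\ref{lem:bounded1}.

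For the first claim, I view $\lambda_i$ as an analytic function of $n \in \mathbb{R}^+$ via the parametrization $z = z(n) = (e^{ns}-1)^{-1}$. The mean value theorem gives
\begin{equation*}
|\mu_i - \lambda_i| \leq \sup_{t \in [n,n+1]} \bigl|\tfrac{d}{dt}\lambda_i(t)\bigr|.
\end{equation*}
By the chain rule $\tfrac{d\lambda_i}{dt} = (\partial_z \lambda_i)\,z'(t)$, and a direct computation from $z(t) = (e^{ts}-1)^{-1}$ yields $z'(t) = -sz(1+z)$. Implicit differentiation of $p(\lambda_i)=0$, together with the identity $\lambda_i^{k-1} + \cdots + 1 = z\lambda_i^k$ valid at any root, gives
\begin{equation*}
\partial_z \lambda_i \;=\; -\frac{z^{-1}\lambda_i^k}{\prod_{j \neq i}(\lambda_i - \lambda_j)}.
\end{equation*}
Lemma~\ref{lem:bounded1}, applied with the roles of the indices swapped to obtain $|\lambda_i - \lambda_j| \gg |\lambda_i|$ as well, yields $|\partial_z \lambda_i| \ll |\lambda_i|/z$. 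Hence $|\tfrac{d}{dt}\lambda_i(t)| \ll s(1+z(t))\,|\lambda_i(t)|$. The elementary bound $sz(t) = s/(e^{ts}-1) = O(s + 1/t)$ (split according to whether $ts \leq 1$ or $ts \geq 1$) gives $s(1+z(t)) = O(s + 1/n)$ on $[n,n+1]$. A logarithmic integration of the same derivative bound shows $|\lambda_i(t)| \asymp |\lambda_i|$ on this interval, completing the first part.

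The first derivative bound for $\lambda_1$ is immediate from the calculation above, since $\lambda_1/z \leq \lambda_1(1 + 1/z)$. For the second derivative I would first rewrite the characteristic equation in the factored form $z\lambda_1^{k+1} - (1+z)\lambda_1^k + 1 = 0$ (by multiplying $p$ by $\lambda - 1$), which yields the compact expression
\begin{equation*}
\lambda_1' \;=\; -\frac{\lambda_1(\lambda_1 - 1)}{z(k+1)\lambda_1 - k(1+z)}.
\end{equation*}
Differentiating once more in $z$ via the chain rule, substituting the first-derivative bound, and using Lemma~\ref{lem:bounded1} (which forces $|z(k+1)\lambda_1 - k(1+z)| \gg z|\lambda_1 - 1|$ via the relation of the denominator to $\partial_\lambda p(\lambda_1)$) then yields $|\lambda_1''| \ll \lambda_1(1+1/z)^2$.

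The main obstacle is book-keeping in the second-derivative estimate: after one more differentiation the numerator contains several summands whose individual sizes exceed the desired bound, and whose interplay depends delicately on whether $z$ is small (so $\lambda_1 \approx z^{-1}$ and the denominator is $O(1)$) or large (so $\lambda_1 \approx z^{-1/k}$ and the denominator is $\approx -kz$). The two asymptotic regimes are straightforward once one tracks leading orders carefully; the intermediate range then follows by a continuity/compactness argument of the type used at the end of the proof of Lemma~\ref{lem:bounded1}.
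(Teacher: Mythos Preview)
Your proposal is correct and follows the same overall strategy as the paper: implicit differentiation of the characteristic polynomial, combined with a split into the regimes $z\ll 1$, $z\gg 1$, and a compactness argument in between. The one notable difference is in how you bound $p'(\lambda_i)$ in the first part. The paper keeps the explicit expression
\[
\frac{\partial \lambda_i}{\partial z} \;=\; -\frac{z^{-2}(\lambda_i^{k-1}+\cdots+1)}{k\lambda_i^{k-1}-z^{-1}((k-1)\lambda_i^{k-2}+\cdots+1)}
\]
and verifies $\partial_z\log\lambda_i = O(1+z^{-1})$ separately for large and small $z$ using the asymptotics of Lemma~\ref{lem:primaryEigenvalueBound}. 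You instead factor $p'(\lambda_i)=\prod_{j\neq i}(\lambda_i-\lambda_j)$ and invoke Lemma~\ref{lem:bounded1} to get $|p'(\lambda_i)|\gg|\lambda_i|^{k-1}$ uniformly, which yields $|\partial_z\lambda_i|\ll|\lambda_i|/z$ without any case split; this is slightly sharper (and cleaner) than the paper's $O(1+z^{-1})$ bound on the logarithmic derivative. For the second derivative you use the factored form $z\lambda_1^{k+1}-(1+z)\lambda_1^k+1=0$, whereas the paper differentiates this same relation directly in equation~\eqref{eqn:secondderivative}; both routes then fall back on the same regime analysis and compactness argument, so there is no substantive difference at that stage.
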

\begin{proof}
The first result follows from the claim that
$$
\frac{\partial \log(\lambda_i(z))}{\partial z} = O(1 + z^{-1}).
$$
This follows from the above bounds on $\lambda_i$ and the identity
\begin{equation}\label{eqn:firstderivative}
\frac{\partial}{\partial z} \lambda_i(z)
= -\frac{z^{-2}(\lambda_i^{k-1}+\ldots+1)}{k\lambda_i^{k-1}-z^{-1}((k-1)\lambda_i^{k-2} + \ldots + 1)}.
\end{equation}
In particular, the above allows us to check our claim for $z\gg 1$ and for $z\ll 1$.
As in Lemma \ref{lem:bounded1}, the claim follows for intermediate $z$ by a compactness argument.
The bound on the second derivative follows similarly.
We note that by differentiating $\lambda_1^{k+1} - \lambda_1^{k} - z^{-1} \( \lambda_1^{k} - 1\) =0$
we have the identity
\begin{align}\label{eqn:secondderivative}
\( (k+1) \lambda_1^k - k \lambda_1^{k-1} - z^{-1} k \lambda_1^{k-1}\) & \frac{\partial^2 \lambda_1}{\partial z^2} \nonumber
\\ =& 2z^{-3}( \lambda_1^{k}-1) - \frac{\partial \lambda_1}{\partial z} \cdot 2 z^{-2} k \lambda_1^{k-1}  \\
&- \(\frac{\partial \lambda_1}{\partial z}\)^2 \(  (k+1) k \lambda^{k-1} - k(k-1)(1+z^{-1}) \lambda_1^{k-2}\) \nonumber
\end{align}
\end{proof}

\begin{lemma}\label{lem:bound3}
In the notation of Lemma \ref{lem:transitionmatrix} for $j \ne m$
$$\abs{\frac{\mu_i - \lambda_m}{\lambda_j - \lambda_m} \cdot \frac{\lambda_j}{\mu_i}}$$
is bounded by some constant depending only on $k$.
\end{lemma}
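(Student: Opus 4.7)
The plan is to decompose $\mu_i - \lambda_m = (\lambda_i - \lambda_m) + (\mu_i - \lambda_i)$, splitting the target expression into two pieces and treating each separately. The ``error'' piece $(\mu_i - \lambda_i)\lambda_j/[(\lambda_j - \lambda_m)\mu_i]$ is controlled using Lemma \ref{lem:bound2}, which gives $|\mu_i - \lambda_i| = O(|\lambda_i|(s + n^{-1}))$, together with Lemma \ref{lem:bounded1}, which gives $|\lambda_j - \lambda_m| \gg |\lambda_j|$, and the fact that $|\mu_i| \asymp |\lambda_i|$ (this last from Lemma \ref{lem:bound2} when $s + n^{-1}$ is small, and by a compactness argument otherwise, exactly as in the proof of Lemma \ref{lem:bounded1}). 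The combined bound for this piece is $O(s + n^{-1}) = O(1)$.

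The remaining ``main'' piece $(\lambda_i - \lambda_m)\lambda_j/[(\lambda_j - \lambda_m)\mu_i]$ is, up to the bounded factor $\lambda_i/\mu_i$, a purely algebraic function of $z = z(n+1)$:
$$B(z) := \frac{(\lambda_i(z) - \lambda_m(z))\,\lambda_j(z)}{(\lambda_j(z) - \lambda_m(z))\,\lambda_i(z)}.$$
Hence it suffices to show $|B(z)| \ll_k 1$ uniformly for $z > 0$. Following the template of the earlier lemmas, I would argue separately in the small-$z$, large-$z$, and intermediate regimes.

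For $z \gg 1$, Lemma \ref{lem:primaryEigenvalueBound} gives all $|\lambda_\ell| \asymp z^{-1/k}$ with pairwise differences of the same order, so $|B(z)| = O(1)$ immediately. For $z \ll 1$, Lemma \ref{lem:primaryEigenvalueBound} gives $|\lambda_1| \asymp z^{-1}$ and $|\lambda_\ell| \asymp 1$ for $\ell \neq 1$, with $|\lambda_1 - \lambda_\ell| \asymp z^{-1}$ for $\ell \neq 1$ and $|\lambda_\ell - \lambda_{\ell'}| \asymp 1$ for $\ell, \ell' \neq 1$. A short case analysis by which (if any) of $i, j, m$ equals $1$ shows that the factors of $z^{-1}$ always cancel between numerator and denominator, yielding $|B(z)| = O(1)$ in each of the four subcases. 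For $z$ in a fixed compact interval bounded away from $0$ and $\infty$, $B$ is a continuous function of $z$ (Lemma \ref{lem:distinct} ensures the denominator does not vanish), hence bounded by compactness.

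The only real obstacle is the small-$z$ case analysis, where $\lambda_1$ is larger than the other eigenvalues by a factor of $z^{-1}$ and one must check that this largeness cancels in every configuration of $(i,j,m)$. This is a straightforward enumeration, and once it is in hand the compactness argument disposes of the intermediate range exactly as in the proofs of Lemmas \ref{lem:primaryEigenvalueBound} and \ref{lem:bounded1}.
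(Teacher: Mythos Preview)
Your proof is correct and takes a genuinely different route from the paper's. You first split off the perturbation $\mu_i-\lambda_i$ using Lemma~\ref{lem:bound2}, reducing to the purely algebraic quantity $B(z)=\frac{(\lambda_i-\lambda_m)\lambda_j}{(\lambda_j-\lambda_m)\lambda_i}$, and then bound $B$ by the small-$z$/large-$z$/compactness trichotomy. The paper instead works with the full expression directly and does a case analysis on whether $i$ or $j$ equals the primary index $1$: when neither does, it bounds $|\mu_i-\lambda_m|$ by $O(|\lambda_m|)$ via the triangle inequality (using that non-primary $|\mu_i|$ is $O(|\lambda_m|)$) and combines with $|\lambda_j-\lambda_m|\gg|\lambda_m|$ from Lemma~\ref{lem:bounded1}; when $i=1$ it observes $|\mu_1-\lambda_m|/|\mu_1|=O(1)$ since $\mu_1$ dominates, leaving $|\lambda_j/(\lambda_j-\lambda_m)|=O(1)$; and symmetrically for $j=1$. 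Your decomposition is conceptually cleaner in that it isolates the ``change in $n$'' from the ``shape at fixed $z$,'' at the cost of a slightly longer case enumeration on $B(z)$; the paper's argument is shorter but mixes the two effects. One small remark: your appeal to compactness for $|\mu_i|\asymp|\lambda_i|$ is unnecessary, since the logarithmic-derivative bound in the proof of Lemma~\ref{lem:bound2} already gives $\log(\mu_i/\lambda_i)=O_k(s+n^{-1})=O_k(1)$ uniformly.
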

\begin{proof}
This lemma follows from Lemmas \ref{lem:bounded1} and \ref{lem:bound2}. In particular,
in the case when neither $i$ nor $j$ is 1 then
$$
\abs{\lambda_j - \lambda_m} \gg \abs{\lambda_m} \gg \abs{\mu_i-\lambda_m}.
$$
Thus $\abs{\frac{\mu_i - \lambda_m}{\lambda_j - \lambda_m}}$ is bounded above as is $\abs{\frac{\lambda_j}{\mu_i}}$.

If $i=1$, the quantity in question is
$$
O\left(\abs{ \frac{\lambda_j}{\lambda_j-\lambda_m} } \right) = O(1).
$$ Similarly, the result follows for $j=1$.
\end{proof}

\begin{proposition}\label{prop:transitionmatrix}
The transition matrix $A(n+1)^{-1} A(n) = I_k+O_k\(s + \frac{1}{n}\)$
where $I_k$ is the $k\times k$ identity matrix.
\end{proposition}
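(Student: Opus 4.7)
The plan is to apply the explicit formula from Lemma \ref{lem:transitionmatrix} entrywise. Writing $\mu_i = \lambda_i(n)$ and $\lambda_i = \lambda_i(n+1)$, we have
$$
T(n)^{i,j} = \prod_{m \neq i} \left( \frac{\mu_j - \lambda_m}{\lambda_i - \lambda_m} \cdot \frac{\lambda_i}{\mu_j} \right),
$$
and the estimate splits naturally into the diagonal case ($i = j$) and the off-diagonal case ($i \neq j$).

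For a diagonal entry, the key observation is that each factor in the product equals $1$ in the limit $\mu_i = \lambda_i$. Setting $\eta_i := \lambda_i - \mu_i$, one can write
$$
\frac{\mu_i - \lambda_m}{\lambda_i - \lambda_m} \cdot \frac{\lambda_i}{\mu_i} = \left(1 - \frac{\eta_i}{\lambda_i - \lambda_m}\right)\left(1 + \frac{\eta_i}{\mu_i}\right).
$$
Lemma \ref{lem:bound2} gives $\abs{\eta_i} = O_k(\abs{\lambda_i}(s + 1/n))$, and Lemma \ref{lem:bounded1}, applied with the two root indices in either order, yields $\abs{\lambda_i - \lambda_m} \gg_k \abs{\lambda_i}$; together with $\abs{\mu_i} \asymp \abs{\lambda_i}$ (another consequence of Lemma \ref{lem:bound2}), this shows that each factor is $1 + O_k(s + 1/n)$. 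Taking the product over the $k - 1$ values of $m \neq i$ then gives a diagonal entry equal to $1 + O_k(s + 1/n)$.

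For an off-diagonal entry $i \neq j$, the plan is to isolate the factor corresponding to $m = j$, since its numerator $\mu_j - \lambda_j$ is the only place where smallness enters. Rewriting this factor as
$$
\frac{\mu_j - \lambda_j}{\lambda_i - \lambda_j} \cdot \frac{\lambda_i}{\mu_j} = \frac{\lambda_i}{\lambda_i - \lambda_j} \cdot \frac{\mu_j - \lambda_j}{\mu_j},
$$
Lemma \ref{lem:bounded1} (with labels swapped) gives $\abs{\lambda_i/(\lambda_i - \lambda_j)} = O_k(1)$, and Lemma \ref{lem:bound2} together with $\abs{\mu_j} \asymp \abs{\lambda_j}$ bounds the second factor by $O_k(s + 1/n)$. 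The remaining $k - 2$ factors (those with $m \neq i, j$) are each $O_k(1)$ by Lemma \ref{lem:bound3}, applied with the roles of $i$ and $j$ swapped relative to the lemma's statement. Multiplying yields an off-diagonal entry of size $O_k(s + 1/n)$.

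The only subtlety is that $\lambda_1$ can be much larger than the other eigenvalues in the small-$z$ regime (roughly $n \gg 1/s$), so a naive bound on ratios like $\lambda_i/\mu_j$ would blow up. The saving grace is that Lemma \ref{lem:bounded1}, holding in either ordering of the indices, gives $\abs{\lambda_i - \lambda_j} \gg_k \max(\abs{\lambda_i}, \abs{\lambda_j})$, so any large eigenvalue in a numerator is automatically accompanied by a matching large denominator — which is exactly the cancellation already packaged in Lemma \ref{lem:bound3}.
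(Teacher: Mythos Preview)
Your proof is correct and follows essentially the same approach as the paper: bound the diagonal entries by writing each factor as $1 + O_k(s+n^{-1})$ via Lemmas \ref{lem:bounded1} and \ref{lem:bound2}, and bound the off-diagonal entries by isolating the $m=j$ factor and invoking Lemma \ref{lem:bound3} on the rest. If anything, your write-up is a bit more careful than the paper's --- you explicitly note the index swap needed when applying Lemmas \ref{lem:bounded1} and \ref{lem:bound3}, and you separate out the $\lambda_i/\mu_i$ factor in the diagonal case, whereas the paper absorbs it silently.
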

\begin{proof}
We claim that
$$T(n)^{i,j} =  [A(n+1)^{-1} A(n)]_{j,i}  = \prod_{m\neq i} \frac{\mu_j - \lambda_m}{\lambda_i - \lambda_m}
\cdot \frac{\lambda_i}{\mu_j} = \delta_{i,j} + O(s +n^{-1}).$$
If $i\neq j$, by Lemma \ref{lem:bound2} the $m=j$ term of the product is
$$
\frac{\mu_j - \lambda_j}{\lambda_i - \lambda_j} \cdot
\frac{\lambda_i}{\mu_j} = O(s+n^{-1}) \cdot \frac{\lambda_i}{\lambda_i-\lambda_j} = O(s+n^{-1}).
$$
and the remaining terms are $O(1)$ by Lemma \ref{lem:bound3}.  This proves our bound for the off-diagonal coefficients.

For $i=j$, by Lemma \ref{lem:bound2} each $m$-term in the above product equals
$$
\frac{\lambda_i-\lambda_m + O(s+n^{-1})\abs{\lambda_i}}{\lambda_i-\lambda_m} = 1+O(s+n^{-1}).
$$
Taking a product over $m$ yields $1+O(s+n^{-1})$, which proves our claim.
\end{proof}

We conclude this section with one additional lemma dealing with the ratio of eigenvalues.
\begin{lemma}\label{lem:ratio}
If $i\ne 1$ and $ns \ll 1$ then
$$\frac{\abs{\lambda_i(n)}}{\abs{\lambda_1(n)}} \le  \exp\( - c (ns)^{\frac{1}{k}}\)$$
for some positive constant $c$.
\end{lemma}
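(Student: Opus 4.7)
The condition $ns \ll 1$ means $ns$ is bounded above by some constant $C_k$, and correspondingly $z=z(n)=e^{-ns}/(1-e^{-ns})$ is bounded below by some positive constant, with $z \sim 1/(ns)$ as $ns\to 0$. I would split the argument on the size of $z$: the main case $z\ge Z$ (for a suitably large $Z$) is treated via the asymptotic from Lemma~\ref{lem:primaryEigenvalueBound}, while the remaining compact range $z \in [z_0,Z]$ is handled by a continuity argument.

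In the main case, Lemma~\ref{lem:primaryEigenvalueBound} provides $\lambda_i = \omega_i z^{-1/k}\left(1 + \tfrac{\omega_i}{k}z^{-1/k} + O(z^{-2/k})\right)$, where $\omega_1,\ldots,\omega_k$ are the distinct $k$-th roots of unity with $\omega_1=1$. To extract $|\lambda_i|$ I would expand $|\lambda_i|^2 = \lambda_i\overline{\lambda_i}$; using $\omega_i\overline{\omega_i}=1$ the cross terms combine as $\omega_i\overline{\omega_i^2} + \overline{\omega_i}\omega_i^2 = 2\,\text{Re}(\omega_i)$, yielding
$$
|\lambda_i| = z^{-1/k}\left(1 + \frac{\text{Re}(\omega_i)}{k}z^{-1/k} + O(z^{-2/k})\right),
$$
and dividing by the analogous expression for $\lambda_1$ gives
$$
\frac{|\lambda_i|}{|\lambda_1|} = 1 + \frac{\text{Re}(\omega_i) - 1}{k}z^{-1/k} + O(z^{-2/k}).
$$
For $i\ne 1$, $\omega_i$ is a nontrivial $k$-th root of unity so $\text{Re}(\omega_i)\le \cos(2\pi/k)<1$, making the first-order term strictly negative. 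Choosing $Z$ large enough to absorb the $O(z^{-2/k})$ correction gives a ratio bounded by $\exp(-c_1 z^{-1/k})$, and since $z \le 1/(ns)$ implies $z^{-1/k}\ge (ns)^{1/k}$, this yields $|\lambda_i|/|\lambda_1| \le \exp(-c(ns)^{1/k})$ for an appropriate $c>0$ depending only on $k$.

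In the compact case $z\in[z_0,Z]$ the exponent $(ns)^{1/k}$ is bounded above by $C_k^{1/k}$, so it suffices to show $|\lambda_i|/|\lambda_1|$ is uniformly bounded by some $\rho<1$. Strict inequality $|\lambda_i|<\lambda_1$ at each fixed $z>0$ follows from the triangle inequality applied to $\lambda_i^k = z^{-1}(\lambda_i^{k-1}+\cdots+1)$, which gives $p(|\lambda_i|)\le 0$ for $p(x)=x^k - z^{-1}(x^{k-1}+\cdots+1)$; since Descartes' rule of signs shows $p$ has a unique positive real root (namely $\lambda_1$), one obtains $|\lambda_i|\le \lambda_1$, while equality in the triangle inequality would force the $\lambda_i^\ell$ to share an argument, hence $\lambda_i$ itself positive real and thus equal to $\lambda_1$, contradicting Lemma~\ref{lem:distinct}. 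Continuous dependence of the eigenvalues on $z$ then yields a uniform $\rho<1$ on $[z_0,Z]$, and one chooses $c$ small enough that $\rho \le \exp(-cC_k^{1/k})$. The main obstacle is the calculation in the main case, in particular observing that only the real part of $\omega_i$ enters the leading correction to $|\lambda_i|$ --- which is the structural reason $\lambda_1$ is strictly dominant.
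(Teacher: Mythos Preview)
Your argument is correct and follows the same route as the paper's proof, which simply invokes the large-$z$ asymptotic of Lemma~\ref{lem:primaryEigenvalueBound} to conclude $|\lambda_i|/|\lambda_1| = \exp(-\Omega(z^{-1/k})) = \exp(-\Omega((ns)^{1/k}))$. Your treatment is more thorough: you make the real-part computation explicit and add a compactness argument on $[z_0,Z]$ that the paper omits (implicitly reading $ns \ll 1$ as ``$z$ large enough for the asymptotic to apply'').
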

\begin{proof}
This follows easily from the first case of Lemma \ref{lem:primaryEigenvalueBound}.
Namely,  for $i\neq 1$
$$
\frac{|\lambda_i|}{|\lambda_1|} = \exp(-\Omega(z^{\frac1k})) = \exp(-\Omega((ns)^{\frac1k})).
$$
\end{proof}


 \section{Calculations of the early matrices}\label{sec:Early}
In this section, we construct an approximation for the vector $$\wt{V}(N):= \( \wt{v}_a(N)\)_{a=0}^{k-1} =
 \prod_{n=1}^N m(n) {\bf e}_1
 $$
with $s^{-1/2} \gg N \gg  s^{-\frac{1}{k+1}}\log(s^{-1})^{\frac{k}{k+1}} $.

\begin{theorem}\label{thm:runup}
Assume that $k\mid N$ for some integer $N$ with $s^{-\frac{2}{k+2}}>N$ and $N$ greater than a sufficiently large multiple of $s^{-\frac{1}{k+1}}\log(s^{-1})^{\frac{k}{k+1}}$, then
\begin{align*}
\wt{v}_a(N) =&  \(sN\)^{-\frac{a}{k} - N\frac{k-1}{k} } e^{-\frac{N}{k}}\frac{1}{k^{\frac{3}{2}}}
\exp\( s^{\frac{1}{k}} N^{\frac{k+1}{k}} (k+1)^{-1} + O_k\(  sN^2 +s^{\frac{2}{k}}N^{\frac{k+2}{k}}
\) \)
\end{align*}
\end{theorem}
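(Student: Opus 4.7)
The plan is to analyze the matrix product $\wt V(N) = \prod_{n=1}^N m(n) \mathbf{e}_1$ via the eigendecompositions of Section \ref{sec:Diagonalization}. Writing each $m(n) = A(n) D(n) A(n)^{-1}$ and interposing the transition matrices $T(n) = A(n+1)^{-1} A(n)$, we obtain
\[
\wt V(N) = A(N)\bigl(D(N)\, T(N-1)\, D(N-1)\, T(N-2) \cdots T(1)\, D(1)\bigr) A(1)^{-1} \mathbf{e}_1,
\]
so that $\wt v_a(N) = \sum_j \lambda_j(N)^{-a}\, v(N)_j$ where $v(N)$ is the bracketed vector. The strategy has three parts: (i) compute the initial vector $A(1)^{-1}\mathbf{e}_1$; (ii) iterate the recursion $v(n{+}1) = D(n{+}1)\, T(n)\, v(n)$ tracking each eigenchannel; (iii) sum the channels and use the hypothesis $k \mid N$ to collapse the phases.

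For (i), a direct Lagrange/Vandermonde computation gives $(A(n)^{-1})_{j,1} = \lambda_j(n)^{k-1}/\prod_{m\ne j}(\lambda_j(n)-\lambda_m(n))$; combining Lemma \ref{lem:primaryEigenvalueBound} ($\lambda_j(n) \approx \omega_j\, z(n)^{-1/k}$) with the identity $\prod_{m\ne j}(\omega_j-\omega_m) = k\,\omega_j^{k-1}$ yields $(A(1)^{-1}\mathbf{e}_1)_j = \tfrac{1}{k} + O(z(1)^{-1/k})$, so $\mathbf{e}_1$ is split essentially evenly among the $k$ eigenspaces. For (ii), a careful expansion of Lemma \ref{lem:transitionmatrix} using the identity $\sum_{m\ne 1}(1-\omega_m)^{-1} = (k-1)/2$ (the logarithmic derivative of $(x^k-1)/(x-1)$ at $x=1$) gives $T(n)^{j,j} = 1 + (k-1)/(2kn) + O(s + n^{-2} + z(n)^{-1/k})$, so $\prod_{n<N} T(n)^{j,j} \sim N^{(k-1)/(2k)}$, while the off-diagonal entries of $T(n)$ are of size $O(s+1/n)$ and only contribute to the error.

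For (iii), expanding $\log z(n) = -\log(sn) - sn/2 + O((sn)^2)$ from the Laurent expansion of $1/(e^{sn}-1)$, applying Stirling's formula to $\log N!$, and using Lemma \ref{lem:primaryEigenvalueBound} to get $\log \mu_j(n) = \log\omega_j + \tfrac{k-1}{k}\log z(n) + \omega_j\, z(n)^{-1/k}/k + O(z(n)^{-2/k})$, one arrives at
\[
\sum_{n=1}^N \log\mu_j(n) = N\log\omega_j + \tfrac{k-1}{k}\bigl(-N\log(sN) + N - \tfrac{1}{2}\log(2\pi N)\bigr) + \frac{\omega_j\, s^{1/k} N^{(k+1)/k}}{k+1} + O_k\bigl(sN^2 + s^{2/k} N^{(k+2)/k}\bigr).
\]
Since $k \mid N$ gives $\omega_j^N = 1$ for all $j$, the phases of the $k$ channels align; and since $N \gg s^{-1/(k+1)} \log^{k/(k+1)}(1/s)$ forces $S_1 = \sum z(n)^{-1/k}$ to be large, the $\omega_j\ne 1$ channels are exponentially suppressed relative to $\omega_1=1$, so only the primary channel survives at leading order. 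The factor $(sN)^{-a/k}$ in $\wt v_a(N)$ then arises from the $a$-th entry $\lambda_1(N)^{-a}$ of the primary eigenvector $V_N^1$.

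The main obstacle I expect is pinning down the prefactor $e^{-N/k}/k^{3/2}$ precisely. A naive ``primary eigenvalue only'' estimate $(1/k)\prod \mu_1(n)$ Stirling-expands to carry an unwanted factor $(2\pi N)^{-(k-1)/(2k)} e^{(k-1)N/k}$, and reconciling this with the claimed prefactor requires simultaneously tracking (a) the polynomial contribution $\prod T^{j,j} \sim N^{(k-1)/(2k)}$ from the transitions, (b) the constant $1/k$ coming from the initial splitting of $\mathbf{e}_1$, and (c) the partial contributions from all $\omega_j$-channels when $k \mid N$ collapses the phases. The off-diagonal entries of $T(n)$ are equally delicate at small $n$, where $T(n) - I$ is only $O(1)$; one must argue, using the smallness of $Ns$ and of $(\log N)/N$ together with the smallness of $z(n)^{-1/k}$ throughout the regime, that their cumulative effect falls inside the stated $O_k(sN^2 + s^{2/k} N^{(k+2)/k})$ error in the exponent.
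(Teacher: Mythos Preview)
Your approach---carrying the eigendecomposition/adiabatic method through the full range $1\le n\le N$---is genuinely different from the paper's. The paper deliberately abandons the adiabatic picture for small $n$ and instead parametrizes partitions directly by their missing parts $n_i$ and ``run shortenings'' $t_j$ (with $n_i = ki - |\{j:t_j\le i\}|$), writing
\[
\wt v_a(N) \;=\; \prod_{n\le N} z(n)\ \sum_{\ell\equiv a\ (k)}\ \sum_{t_1\le\cdots\le t_\ell}\ \prod_i z(n_i)^{-1},
\]
approximating $z(n_i)^{-1}\approx s\,n_i$, replacing the $t$-sums by integrals $\int_0^1 t^{1/k}\,dt = k/(k+1)$, and summing over $\ell$ via $\tfrac1k\sum_{t\bmod k}\zeta_k^{at}\exp(\zeta_k^t x)$. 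In that route the constant $k^{-3/2}$ falls out as a product of three transparent pieces: a $\sqrt{2\pi N/k}$ from Stirling on $M!$, a $1/k$ from the root-of-unity average, and a $1/\sqrt{2\pi N}$ from Stirling on $\prod z(n)$.

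The substantive gap in your sketch is the off-diagonal control, and it is not merely a technicality you can hope to absorb. Your diagonal estimate $\prod_n T(n)^{j,j}\sim N^{(k-1)/(2k)}$ is correct, but the cumulative off-diagonal correction to the primary channel is $\sum_n \sum_{j\ne 1} T(n)^{1,j}\,w_j(n)/w_1(n)$, with $T(n)^{1,j}\sim c_j/n$ and $w_j(n)/w_1(n)\approx \omega_j^{\,n}\exp\bigl((\omega_j-1)\,s^{1/k}n^{(k+1)/k}/(k{+}1)\bigr)$. For $n\lesssim s^{-1/(k+1)}$ the decay factor is essentially $1$, so this reduces to sums of the shape $\sum_n c_j\,\omega_j^{\,n}/n$, which converge by oscillation not to zero but to an $O_k(1)$ constant. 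At the value $N\asymp s^{-3/(2k+3)}$ ultimately used in Section~\ref{sec:NonReal}, the permitted error $O(sN^2+s^{2/k}N^{(k+2)/k})=O(s^{1/(2k+3)})$ is $o(1)$, so that constant cannot be absorbed: it must be computed exactly and shown to combine with your $1/k$ from $A(1)^{-1}\mathbf e_1$ and the $\Gamma$-type constant hidden in $\prod(1+(k{-}1)/(2kn))$ to produce precisely $k^{-3/2}$. That calculation is the crux of the prefactor and is absent from your sketch; the paper's combinatorial route for small $n$ (flagged already in Section~\ref{sec:Idea}) was chosen precisely to sidestep it.
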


Before proving Theorem \ref{thm:runup} we introduce some notation.
Each entry of the vector is the generating function for the number of
 partitions with no $k$-sequence, no parts larger than $N$, and
 the largest missing part size is $-a\pmod{k}$.
 In this section we use the phrase ``run'' to refer to the gap between missing parts.
 Given a partition $\lambda$ with parts of size at most $N$ and no $k$-sequence, we let
 $$\ell = \ell(\lambda) = \sum_{ \text{``runs''}} (k  - \text{``length of run''}).$$
 It is clear that $\ell \le (k-1) N$.
 Note that the length of the run must be less than $k$ and that $\ell \equiv a \pmod{k}$.
 Let $n_j  = n_j(\lambda)$ be the parts not appearing in $\lambda$ satisfying
 $$0<n_1 < n_2 < \cdots < n_{\lfloor \frac{N+\ell}{k}\rfloor}.$$
 We have
 $$n_j = kj - \sum_{\text{``runs'' before } n_j} (k - \text{``length of run''}).$$
 We let $\{ t_j\}$ be the shortenings of the runs.
Namely, the length of the run before $n_i$ is equal to $$k- \abs{\{ j: t_j = i\}}$$
and we have
\begin{equation}\label{eqn:niFromti}
n_i = ki - \abs{\{j: t_j \le i\}}.
\end{equation}
 So we have
 $$0 \le t_1 \le t_2 \le \cdots \le t_\ell \le\left\lfloor \frac{N+\ell}{k} \right\rfloor.$$
 Note that a sequence of missing parts $\{ n_j\}$ determines the sequence $\{t_j\}$ and vice versa.
 We set $$M:= \left\lfloor \frac{N+\ell}{k} \right\rfloor = \frac{N}{k} + \frac{\ell - a}{k}.$$

So we have
\begin{equation}\label{eqn:runupentry}
\wt{v}_a(N):=
\prod_{n=1}^N z(n) \cdot  \sum_{\ell \equiv a\pmod{k} }\sum_{t_1 \le \cdots \le t_\ell} \prod_{i} z(n_i)^{-1},
\end{equation}
where the sum on $\ell$ runs over $\ell \le (k-1) N$.
For now we ignore the term $\prod_{n=1}^N z(n)$ as this term can be dealt with separately.
The idea for analyzing the remaining sum is that for $N$ about this size runs are likely to be of size $k-1$ or $k-2$.  One might interpret this
as saying that all the smallest parts want to appear subject to the constraint that every $k$th part cannot appear.
This agrees with Fristedt's probabilistic model.

Next we give a lemma which says we can ignore large $\ell$ values.
\begin{lemma}\label{lem:truncatedEll}
In the notation above,
$$
\sum_{\begin{subarray}{c} \ell \equiv a\pmod{k}\\ 2keN^{\frac{k+1}{k}}s^{\frac{1}{k}} <  \ell \le (k-1) N\end{subarray} }
\sum_{t_1 \le  \cdots \le  t_\ell} \prod_{i} z(n_i)^{-1}  = (sN)^{\frac{N}{k}} O(s^2).
$$
\end{lemma}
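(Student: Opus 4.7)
The plan is to obtain a uniform geometric bound on $S(\ell) := \sum_{t_1\le\cdots\le t_\ell}\prod_i z(n_i)^{-1}$ for $\ell > \ell_0 := 2keN^{(k+1)/k}s^{1/k}$ and sum the tail as a geometric series. First, reindex the inner sum by shortening multiplicities $r_i := |\{j : t_j = i\}| \in \{0, \ldots, k-1\}$ with $\sum_i r_i = \ell$, so $n_i = ki - \sum_{j\le i} r_j \le ki$; by monotonicity of $z^{-1}$, $\prod_i z(n_i)^{-1} \le \prod_i z(ki)^{-1}$. Since $kMs = s(N+\ell-a) \le ksN = O(s^{k/(k+2)}) \to 0$, the pointwise estimate $z(m)^{-1} = e^{ms} - 1 \le ms\cdot e^{ms}$ gives $\prod_i z(ki)^{-1} \le C_k(ks)^M M!$ uniformly in the configuration. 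The number of admissible $(r_i)$ is at most $\binom{M+\ell-1}{\ell} \le (e(M+\ell)/\ell)^\ell$, so combining with Stirling $M!\le e\sqrt{M}(M/e)^M$ and $kM = N+\ell-a$,
\begin{equation*}
S(\ell) \le C_k'\sqrt{M}\,(sN)^{N/k}\,e^{-N/k}\,(1+\ell/N)^{N/k}\,\rho_k^\ell,
\end{equation*}
modulo a polynomial-in-$s$ factor absorbed into $C_k'$, where $\rho_k := 2^{1/k}(k+2)/(2k^2 e^{1/k})$ is the upper bound on the $\ell$-dependent bracket at $\ell = \ell_0$; one verifies $\rho_k < 1$ for all $k\ge 2$.

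The analysis now splits into two regimes. For $\ell_0 < \ell \le N$, the elementary bound $(1+\ell/N)^{N/k} \le e^{\ell/k}$ consolidates the estimate to $S(\ell) \le C_k'\sqrt{M}(sN)^{N/k}e^{-N/k}\eta_k^\ell$ with $\eta_k := e^{1/k}\rho_k$; a direct calculation shows $\eta_k < 1$ for all $k\ge 2$, so summing the geometric series gives $\sum_{\ell_0 < \ell \le N}S(\ell) \le C_k''(sN)^{N/k}\sqrt{N}\,e^{-N/k}\eta_k^{\ell_0}$, which is $(sN)^{N/k}O(s^2)$ since $e^{-N/k}$ is super-polynomially small in $s^{-1}$. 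For $N < \ell \le (k-1)N$, applying $(1+\ell/N)^{N/k} \le k^{N/k}$ and $\rho_k^\ell \le \rho_k^N$ yields $S(\ell) \le C_k'\sqrt{M}(sN)^{N/k}(k\rho_k^k/e)^{N/k}$; one checks $k\rho_k^k/e < 1$ for all $k\ge 2$, so each term is exponentially small in $N$, and summing over the $O(N)$ values of $\ell$ in this range still yields $(sN)^{N/k}O(s^2)$.

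The hypothesis that $N$ exceeds a sufficiently large multiple of $s^{-1/(k+1)}\log(s^{-1})^{k/(k+1)}$ guarantees $N^{(k+1)/k}s^{1/k} \ge C\log s^{-1}$ for any prescribed constant $C$, so $\ell_0$ is as large a multiple of $\log s^{-1}$ as we wish and the decay factor $\eta_k^{\ell_0}$ is as small as any fixed power of $s$. The main technical obstacle is the careful bookkeeping of polynomial-in-$s$ prefactors arising from the $a$-dependence and from splitting the $M$-th power $(s(N+\ell)/e)^M$ into an $(sN)^{N/k}$ piece and a residual $(1+\ell/N)^{N/k}$ piece, together with the verification of the numerical inequalities $\eta_k < 1$ and $k\rho_k^k/e < 1$; both follow from substituting the explicit form of $\rho_k$ and a uniform-in-$k$ computation. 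It is crucial that the second regime is controlled not by the geometric decay in $\ell$ (which is weak relative to the $k^{N/k}$ growth of $(1+\ell/N)^{N/k}$) but by the exponential smallness in $N$ of $(k\rho_k^k/e)^{N/k}$, so the two-regime split cannot be avoided.
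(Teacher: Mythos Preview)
Your argument is essentially correct, but it is considerably more elaborate than the paper's, and one of your concluding claims is wrong.

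The paper uses the much cruder pointwise bound $z(n_i)^{-1}\le z(N)^{-1}$ rather than your $z(n_i)^{-1}\le z(ki)^{-1}$, obtaining $\prod_i z(n_i)^{-1}\le (sN)^{(N+\ell)/k-1}e^{O(N^2 s)}\le (sN)^{N/k}(sN)^{\ell/k}s^{-1}$. Together with the count $\binom{kN}{\ell}\le (kNe/\ell)^\ell$, this gives $S(\ell)\le O(s^{-1})(sN)^{N/k}\bigl(keN^{(k+1)/k}s^{1/k}/\ell\bigr)^\ell$. For $\ell>\ell_0$ the bracket is below $1/2$, so $S(\ell)\le O(s^{-1})(sN)^{N/k}2^{-\ell}$, and since the hypothesis on $N$ forces $\ell_0$ to be a large multiple of $\log(s^{-1})$, summing the geometric tail gives $(sN)^{N/k}O(s^2)$ immediately --- no Stirling, no numerical constant verification, no regime split. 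Your sharper bound $n_i\le ki$ produces the extra factor $e^{-N/k}$, which you then rely on to absorb the polynomial-in-$s^{-1}$ prefactors generated by the $a$-dependence; this is legitimate but roundabout, whereas the paper's bracket already captures the full decay in $\ell$ directly.

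Two further remarks. First, your statement that ``the two-regime split cannot be avoided'' is incorrect: the inequality $(1+\ell/N)^{N/k}\le e^{\ell/k}$ you invoke in regime~1 holds for \emph{all} $\ell>0$, so your regime-1 bound $S(\ell)\le C_k'\sqrt{M}(sN)^{N/k}e^{-N/k}\eta_k^\ell$ with $\eta_k<1$ already covers $N<\ell\le (k-1)N$. Second, there is an unstated step: you justify $\rho_k$ only as a bound on the bracket at the single point $\ell=\ell_0$, yet apply $\rho_k^\ell$ for all $\ell>\ell_0$. This requires checking that the bracket $(s(N+\ell)/e)^{1/k}\cdot e(M+\ell)/\ell$ never exceeds $\rho_k$ on $(\ell_0,(k-1)N]$; that is true for small $s$ but is not automatic, since the bracket is not monotone in $\ell$.
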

\begin{proof}
We note that
$$
\prod_i z(n_i)^{-1} \leq \prod_i z(N)^{-1} = z(N)^{-\floor{\frac{N+\ell}{k}}} \leq (sN)^{\frac{N+\ell}{k} -1}q^{O(N^2)}
\leq(sN)^{\frac{N}{k}}(sN)^{\frac{\ell}{k}}s^{-1}.
$$
The number of choices for $t$'s
is $\le \binom{N+\ell-1}{\ell}\le \binom{kN}{\ell}$. Thus $$
\sum_{t_1 \le \cdots \le t_\ell} \prod_i z(n_i)^{-1} = O\( s^{-1}\binom{kN}{\ell}  (sN)^\frac{N}{k} (Ns)^{\frac{\ell}{k}}\).$$
Noting that
$$
\binom{kN}{\ell} \leq \left( \frac{kNe}{\ell}\right)^\ell,
$$
this is at most
$$
O\left(s^{-1} (sN)^{\frac{N}{k}} \left(keN^{\frac{k+1}{k}}s^{\frac{1}{k}} \ell^{-1} \right)^\ell \right)
\leq O(s^{-1})(sN)^{\frac{N}{k} }  2^{-\ell}.
$$
We note that if $N$ is at least a sufficiently large multiple of $s^{-\frac{1}{k+1}}\log(s^{-1})^{\frac{k}{k+1}}$, then $2^\ell = O(s^3)$.
Summing on $\ell$, yields the result.
\end{proof}

\begin{proof}[Proof of Theorem \ref{thm:runup}]
We apply Lemma \ref{lem:truncatedEll} to the summation in \eqref{eqn:runupentry} and, unless otherwise stated,
in the remainder of this proof we assume the sum on $\ell$ is truncated by $\ell < 2keN^{\frac{k+1}{k}}s^{\frac{1}{k}}$ at a cost of a negligible error.

 We will use the following calculations throughout the proof.
 We have
 $z(n)^{-1} = \frac{1-q^n}{q^n}$, but $q^n = e^{-ns}$, so $1-q^n = ns\( 1+O\(ns\)\)$.  Moreover,
 $\prod q^{n_i} = e^{-\sum n_i s}$
 but $s \sum n_i \le N^2 s \ll 1$ by construction. Therefore we have
 $$\prod_{i} z(n_i)^{-1} = \prod n_i s (1+O(n_i\abs{s})) = s^{M} \prod_{i} n_i
 \cdot (1+O(sN^2)).$$

Recall that
\begin{equation}\label{eqn:ni_approximation}
n_i = ki - \abs{\{j: t_j \le i\}} = ki \exp\( - \frac{ \abs{\{j: t_j \le i\}} }{ki} + O\( \frac{\ell \abs{\{j: t_j \le i\}} }{i^2}\)\).\end{equation}
 So the sum becomes
 \begin{align*}
\sum_{\ell \equiv a\pmod{k}} & \sum_{t_1 \le \cdots \le t_\ell} \prod_{i} z(n_i)^{-1}  \\
 =  &\sum_{\ell \equiv a \pmod{k}} (sk)^M M! \sum_{t_1\le  \cdots \le  t_\ell} \prod_j
 \exp\( - \sum_{i \ge t_j} \frac{1}{ki}  +O\( \frac{\ell}{i^2}\)\)\(1+ O(sN^2)\) \\
 =& \sum_{\ell \equiv a \pmod{k}} (sk)^M \frac{M!}{\ell!}\sum_{t_1, \cdots, t_\ell}  \exp\( -\frac{1}{k}
 \sum_{j=1}^\ell \log\( \frac{M}{t_j}\) + O\(\frac{\ell}{t_j}\)\) \\ & \hspace{2.3in}
 \times\prod_j \(1 + \abs{ \{ i<j: t_i = t_j\}}\)\(1+ O(sN^2)\) \ \\
 &= \sum_{\ell \equiv a \pmod{k}} (sk)^M \frac{M! M^\ell}{\ell!} \(\int_0^1 t^{\frac{1}{k}} e^{O\(\frac{\ell}{Mt}\)} dt\)^\ell
 \( 1+ O\(\frac{\ell^2}{N} + sN^2\)\)  \\
 &= \sum_{\ell \equiv a \pmod{k}} (sk)^M \frac{M! M^\ell}{\ell!} \(\int_0^1 t^{\frac{1}{k}} \(1 + O\(\frac{\ell}{Mt}\)\) dt\)^\ell
 \( 1+ O\(\frac{\ell^2}{N} + sN^2\)\) \\
 &= \sum_{\ell \equiv a\pmod{k}} (sk)^M \frac{M! M^\ell}{\ell!} \( \frac{k}{k+1} \)^\ell\( 1+ O_k\(
s^{\frac{2}{k}}N^{\frac{k+2}{k}}
 + sN^2\)\), \end{align*}
 where we use that $\delta \le \frac{k+1}{k} \epsilon$.
  The third line is obtained by removing the ordering on the $t_i$'s.  The product
 $\frac{1}{\ell!} \prod_{j} \( 1+ \abs{\{ i <j : t_i = t_j\}}\)$ accounts for the introduced over-counting.  The fourth line is obtained by approximating the sum over $t_j$ (once $t_i$ has been fixed for $i<j$) of $t_j^{1/k}\(1 + \abs{ \{ i<j: t_i = t_j\}}\)$ by $\int t^{1/k}dt$.
Additionally, in the fifth line we note that term $O\(\frac{\ell}{Mt}\)$ is always negative, see \eqref{eqn:ni_approximation}.

Applying Stirling's approximation to $M!$, and suppressing the errors, we see that the above is equal to
\begin{align*}
&\(\frac{s}{e}(N-a)\)^{\frac{N-a}{k}}  \sqrt{2\pi \frac{N-a}{k}}
\sum_{\ell \equiv a \pmod{k}} \(\frac{s}{e}\)^{\frac{\ell}{k}}
\( \frac{N+\ell-a}{N-a}\)^{\frac{N-a}{k}+\frac{1}{2}}  \(N+\ell -a\)^{\ell\(\frac{k+1}{k}\)}  \frac{1}{\ell!}
\(\frac{1}{k+1}\)^\ell \\
&= \( \frac{s(N-a)}{e}\)^{\frac{N-a}{k}} \sqrt{2\pi \frac{N-a}{k}} \sum_{\ell \equiv a\pmod{k}}
\( \frac{1}{k+1}  s^{\frac{1}{k}} (N-a)^\frac{k+1}{k}\(1 + O\( \frac{\ell}{N}\)\)  \)^\ell
\frac{1}{\ell!}  \\
=&\( \frac{s(N-a)}{e}\)^{\frac{N-a}{k}} \sqrt{2\pi \frac{N-a}{k}}\( \sum_{\ell \equiv a\pmod{k}}
\( \frac{1}{k+1}  s^{\frac{1}{k}} (N-a)^\frac{k+1}{k}  \)^\ell
\frac{1}{\ell!} \)   \(1 + O\(
s^{\frac{2}{k}}N^{\frac{k+2}{k}}\)\)
\end{align*}
where we have used $\(\frac{N+\ell - a}{N-a}\)^{\frac{N-a}{k}} = \(1+\frac{\ell}{N}\)^{\frac{N-a}{k}}= e^{\frac{\ell}{k}}$
times a negligible error.

Extending the sum to a sum over all $\ell$ rather than those with $\ell < 2ke s^{\frac{1}{k}}N^{\frac{k+1}{k}}$ introduces a negligible error.
The completed sum over $\ell$ is the sum over every $k$-th term of an exponential.  Thus,
suppressing the above error terms, we have
\begin{align*}
\sum_{\ell \equiv a\pmod{k}} &
\(s^{\frac{1}{k}} (N-a)^\frac{k+1}{k} (k+1)^{-1} \)^\ell
\frac{1}{\ell!}
\\= & \frac{1}{k}  \sum_{t\pmod{k}} \zeta_k^{at}\exp\(s^{\frac{1}{k}} (N-a)^\frac{k+1}{k} (k+1)^{-1} \zeta_k^t \)  \\
=& \frac{1}{k}  \exp\( s^{\frac{1}{k}} (N-a)^\frac{k+1}{k} (k+1)^{-1}\)
\(1+O\left(\exp\left(- \frac{s^{\frac{1}{k}}N^{\frac{k+1}{k}}}{2k(k+1)} \right) \right)\)\\
=&  \frac{1}{k}  \exp\( s^{\frac{1}{k}} N^\frac{k+1}{k} (k+1)^{-1}\)
\(1+O\( (sN)^{\frac{1}{k}} \)\)
\end{align*}
where we have approximated $N-a$ by $N$.

To finish the proof of the theorem  we
use
$$\prod_{n=1}^N z(n) = \prod_{n=1}^N (sn)^{-1} \(1+O(ns)\) = \frac{s^{-N}}{N!} \(1+ O\( N^2 s\)\) =
\frac{e^{N}}{(sN)^N \sqrt{2\pi N}} \(1+ O\( N^2 s\)\).$$
\end{proof}


Before concluding this section we give a comparison between $\wt{v_0}(N)$ and the eigenvectors of $m(N)$.
We let $V_n^i$ be the eigenvector $\(\begin{array}{cccc} 1 & \lambda_i(N)^{-1} & \cdots & \lambda_i(N)^{-k+1} \end{array}\)^T$
of $m(n)$ corresponding to the eigenvalue $\lambda_i(n)z(n)$.

\begin{proposition}\label{prop:runupInEigenbasis} In the notation above, with
$V_N^i = \(\begin{array}{cccc} 1 & \lambda_i(N)^{-1} & \cdots & \lambda_i(N)^{-k+1} \end{array}\)^T$ we have
\begin{align*}
\wt{V}(N) = (Ns)^{-N\frac{k-1}{k}} e^{-\frac{N}{k}}
\frac{1}{k^{\frac{3}{2}}} & \exp\( s^{\frac{1}{k}} N^{\frac{k+1}{k}} (k+1)^{-1}  + O\(sN^2 +
s^{\frac{2}{k}}N^{\frac{k+2}{k}}\)
\) V_N^1  \\& + \sum_{i>1} C_N^i V_N^i
\end{align*}
where $$C_N^i \ll (Ns)^{-N\frac{k-1}{k}} e^{-\frac{N}{k}}\exp\( s^{\frac{1}{k}} N^{\frac{k+1}{k}} (k+1)^{-1}
\)O\(sN^2 +
s^{\frac{2}{k}}N^{\frac{k+2}{k}}
\).$$
\end{proposition}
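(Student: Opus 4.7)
The plan is to view the decomposition $\wt{V}(N) = \sum_{j=1}^k C_N^j V_N^j$ as a Vandermonde inversion. Since $V_N^j = (1, \lambda_j^{-1}, \ldots, \lambda_j^{-k+1})^T$ with $\lambda_j = \lambda_j(N)$, the inverse map is explicit via Lagrange interpolation: if $L_j(x) = \prod_{m \neq j}(x - \lambda_m^{-1})/(\lambda_j^{-1} - \lambda_m^{-1})$ and $q_{j,a}$ denotes the coefficient of $x^a$ in $L_j(x)$, then
$$
C_N^j = \sum_{a=0}^{k-1} q_{j,a}\,\wt{v}_a(N), \qquad \sum_{a=0}^{k-1} q_{j,a}\,\lambda_1^{-a} = \delta_{j,1}.
$$
The second identity, immediate from $L_j(\lambda_\ell^{-1}) = \delta_{j,\ell}$, will let me recenter the calculation around the expected main term.

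Set $R := (sN)^{-N(k-1)/k} e^{-N/k} k^{-3/2} \exp(s^{1/k} N^{(k+1)/k}/(k+1))$ and $\varepsilon := sN^2 + s^{2/k}N^{(k+2)/k}$. By Theorem \ref{thm:runup}, $\wt{v}_a(N) = R\,(sN)^{-a/k}(1 + O(\varepsilon))$ uniformly in $a \in \{0,\ldots,k-1\}$. By Lemma \ref{lem:primaryEigenvalueBound} applied to $z = z(N) = (sN)^{-1}(1 + O(sN))$, we get $\lambda_1^{-a} = (sN)^{-a/k}(1 + O((sN)^{1/k}))$. The regime assumption $N \gg s^{-1/(k+1)}\log(s^{-1})^{k/(k+1)}$ forces $(sN)^{1/k} \le \varepsilon$ (the inequality $s^{1/k}N^{(k+1)/k} \ge 1$ is equivalent to $sN^{k+1} \ge 1$), so in fact $\lambda_1^{-a} = (sN)^{-a/k}(1 + O(\varepsilon))$ as well. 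Subtracting yields
$$
\wt{v}_a(N) - R\,\lambda_1^{-a} = R\,(sN)^{-a/k}\cdot O(\varepsilon).
$$

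Plugging into the inversion formula and using $\sum_a q_{j,a} \lambda_1^{-a} = \delta_{j,1}$,
$$
C_N^j - \delta_{j,1}\,R \;=\; \sum_{a=0}^{k-1} q_{j,a}\bigl(\wt{v}_a(N) - R\,\lambda_1^{-a}\bigr) \;\ll\; R\,\varepsilon\sum_{a=0}^{k-1} |q_{j,a}|\,(sN)^{-a/k}.
$$
It then suffices to establish $|q_{j,a}| \ll (sN)^{a/k}$, which I would obtain by expanding $L_j(x)$ as elementary symmetric polynomials in the $\lambda_m^{-1}$: Lemma \ref{lem:primaryEigenvalueBound} gives $|\lambda_m^{-1}| = O((sN)^{-1/k})$ for every $m$, while Lemma \ref{lem:bounded1} applied via $\lambda_j^{-1} - \lambda_m^{-1} = (\lambda_m - \lambda_j)/(\lambda_j \lambda_m)$ gives $|\lambda_j^{-1} - \lambda_m^{-1}| \gg (sN)^{-1/k}$. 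Taking $j = 1$ then yields $C_N^1 = R(1 + O(\varepsilon)) = R\exp(O(\varepsilon))$, matching the stated main term, while $j > 1$ yields $|C_N^j| \ll R\varepsilon$ as claimed.

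The substantive point underpinning the proposition is the regime check $(sN)^{1/k} \ll \varepsilon$: without it the $O((sN)^{1/k})$ correction between $(sN)^{-a/k}$ and the actual eigenvector coordinate $\lambda_1^{-a}$ would produce a nontrivial leading contribution to the non-primary $C_N^j$ rather than being absorbed into $C_N^1$. The remaining Vandermonde/Lagrange bookkeeping is routine once the eigenvalue-separation estimates of Section \ref{sec:Diagonalization} are in hand.
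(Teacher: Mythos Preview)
Your argument is correct and follows essentially the same route as the paper: both invert the Vandermonde eigenbasis matrix $A(N)$ and use the eigenvalue asymptotics of Lemma~\ref{lem:primaryEigenvalueBound} together with Theorem~\ref{thm:runup}. The only organizational difference is that the paper rescales the $a$th row by $(sN)^{a/k}$ so that $A(N)$ becomes the DFT matrix plus a small perturbation, then inverts that perturbed matrix; you instead write the exact inverse via Lagrange interpolation, exploit the identity $L_j(\lambda_1^{-1})=\delta_{j,1}$ to recenter around $R\,\lambda_1^{-a}$, and bound the Lagrange coefficients $|q_{j,a}|\ll (sN)^{a/k}$ directly. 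Your recentering trick makes it a little more transparent why the main term lands entirely in $C_N^1$, but the two computations are equivalent.
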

\begin{proof}
Since the eigenvectors, form a basis, there exist $C_N^i$ so that $\wt{V}(N) = \sum_{i\geq 1} C_N^i V_N^i$.
Applying Theorem \ref{thm:runup}, we have that
$$\wt{v}_a(N) = \wt{v}_0(N) (sN)^{-\frac{a}{k}}\left( 1 + O\left(
 s^{\frac{2}{k}}N^{\frac{k+2}{k}}
+ sN^2 \right) \right).$$
By Lemma \ref{lem:primaryEigenvalueBound} we have that
$$
\lambda_j(N) = e^{2\pi i \frac{(j-1)}{k}} (sN)^{\frac1k} ( 1 + O((sN)^{\frac{2}{k}})).
$$
Therefore, we have that for $0\leq a \leq k-1$,
$$\wt{v}_0(N)\left( 1 + O\left(
s^{\frac{2}{k}}N^{\frac{k+2}{k}}
+ sN^2 \right) \right) = \sum_{i=1}^k e^{-\frac{2\pi i a (j-1)}{k}}
( 1 + O(sN)^{\frac{2}{k}})C_N^i.$$
In other words if $B$ is the matrix with $(a,j)$ entry $e^{-\frac{2\pi i a (j-1)}{k}}$, then $B+O(sN)^{\frac 2k}$ times the vector of $C_N^i$ equals a vector whose entries are $\wt{v}_0(N)\left( 1 + O\left(
s^{\frac{2}{k}}N^{\frac{k+2}{k}}
+ sN^2 \right) \right)$.
Noting that the inverse of $B+O(sN)^{\frac 2k}$ is $B^{-1}+O(sN)^{\frac 2k}$ this implies that
$C_N^1 = \wt{v}_0(N)\left( 1 + O\left( 
s^{\frac{2}{k}}N^{\frac{k+2}{k}}
+ sN^2 \right) \right)$, and $C_N^i = \wt{v}_0(N) O\left( 
s^{\frac{2}{k}}N^{\frac{k+2}{k}}
+ sN^2 \right) $ for $i>1$.  This proves our Proposition.
\end{proof}

Finally, the next proposition compares $\wt{v}_0(N)$ to the product of the eigenvalues.
\begin{proposition}\label{prop:transitionComparison} In the notation above,
\begin{equation*}\label{eqn:transitionComparison}
\frac{\wt{v}_0(N)}{\prod_{n=1}^N \lambda_1(n) z(n)} = \frac{1}{k^{\frac{3}{2}}(2\pi)^{\frac{1-k}{2k}}}
\exp\( \frac{k-1}{2k} \log(N)
+O\( s^{\frac{2}{k}}N^{\frac{k+2}{k}} + sN^2 \)\).
\end{equation*}
\end{proposition}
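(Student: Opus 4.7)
The strategy is to obtain an asymptotic for $\prod_{n=1}^N \lambda_1(n) z(n)$ using Lemma \ref{lem:primaryEigenvalueBound}, then divide the expression for $\wt{v}_0(N)$ given by Theorem \ref{thm:runup} by this product. Since $n \le N \ll s^{-2/(k+2)}$ we have $ns \ll 1$, each $z(n) = (ns)^{-1}(1 + O(ns))$ is large, and the ``$z$ large'' case of Lemma \ref{lem:primaryEigenvalueBound} applies. Taking logarithms of the primary eigenvalue expansion $\lambda_1(n) = z(n)^{-1/k}\bigl(1 + \tfrac{1}{k}z(n)^{-1/k} + O(z(n)^{-2/k})\bigr)$ yields
$$\log\bigl(\lambda_1(n) z(n)\bigr) = \frac{k-1}{k}\log z(n) + \frac{1}{k} z(n)^{-1/k} + O\bigl(z(n)^{-2/k}\bigr).$$

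I would then evaluate the three resulting sums separately. Writing $\log z(n) = -\log(ns) - ns/2 + O((ns)^2)$ and applying Stirling's formula to $\sum \log(ns) = \log N! + N\log s$ gives
$$\sum_{n=1}^N \log z(n) = -N\log(sN) + N - \tfrac{1}{2}\log(2\pi N) + O(sN^2).$$
For the middle sum, expand $z(n)^{-1/k} = (ns)^{1/k}(1 + O(ns))$ and use $\sum_{n=1}^N n^{1/k} = \frac{k}{k+1}N^{(k+1)/k} + O(N^{1/k})$ to obtain a contribution of $\frac{s^{1/k}}{k+1}N^{(k+1)/k}$, with the $(1+O(ns))$ corrections absorbed into $O(sN^2)$. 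The tail $O(z(n)^{-2/k}) = O((ns)^{2/k})$ sums to $O\bigl(s^{2/k}N^{(k+2)/k}\bigr)$.

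Assembling these ingredients gives
$$\prod_{n=1}^N \lambda_1(n) z(n) = (sN)^{-N(k-1)/k}\, e^{(k-1)N/k}\, (2\pi N)^{-(k-1)/(2k)}\exp\!\left(\frac{s^{1/k}N^{(k+1)/k}}{k+1} + O\bigl(sN^2 + s^{2/k}N^{(k+2)/k}\bigr)\right).$$
Dividing the formula for $\wt{v}_0(N)$ from Theorem \ref{thm:runup} by this product, the factors $(sN)^{-N(k-1)/k}$, the exponentials involving $s^{1/k}N^{(k+1)/k}$, and the matching $e^N$-type factors all cancel, leaving $\frac{1}{k^{3/2}}(2\pi N)^{(k-1)/(2k)}$ times an error of the stated form. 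Rewriting $(2\pi N)^{(k-1)/(2k)} = 1/(2\pi)^{(1-k)/(2k)} \cdot \exp\bigl(\tfrac{k-1}{2k}\log N\bigr)$ yields the proposition exactly.

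The main obstacle is error bookkeeping: one must verify that the $(1+O(ns))$ corrections arising in both $\log z(n)$ and $z(n)^{-1/k}$, the secondary term from $(1 + O(ns))^{1/k}$, and the tail $\sum O(z(n)^{-2/k})$ are all dominated by $O\bigl(sN^2 + s^{2/k}N^{(k+2)/k}\bigr)$ throughout the range $s^{-1/(k+1)}\log(s^{-1})^{k/(k+1)} \ll N \ll s^{-2/(k+2)}$. Once this bound is confirmed, the cancellation of the bulk factors between $\wt{v}_0(N)$ and $\prod_{n=1}^N \lambda_1(n) z(n)$ is essentially mechanical.
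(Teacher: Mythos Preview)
Your approach is exactly the paper's: estimate $\prod_{n=1}^N \lambda_1(n)z(n)$ via Lemma~\ref{lem:primaryEigenvalueBound} and Stirling, then divide into the formula of Theorem~\ref{thm:runup}. One caveat on the ``matching $e^N$-type factors'' cancelling: the exponent $e^{-N/k}$ in the statement of Theorem~\ref{thm:runup} is a typo (tracing its own proof yields $e^{N(k-1)/k}$), and it is only with this corrected factor that the cancellation against the $e^{(k-1)N/k}$ in your eigenvalue product actually goes through.
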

\begin{proof}
By Lemma \ref{lem:primaryEigenvalueBound} we see that the product of the first $N$ primary eigenvalues
is
\begin{align*}
\prod_{n=1}^N \lambda_1(n) z(n)  =& \prod_{n=1}^N (ns)^{  \frac{1}{k}} \( 1+ \frac{1}{k} (ns)^{\frac{1}{k}}
+ O(ns)^{\frac{2}{k}} \) \cdot (ns)^{-1} \(1+ O(ns)\)  \\
=&  \prod_{n=1}^N (ns)^{- \frac{k-1}{k}} \(1+ \frac{1}{k} (ns)^{\frac{1}{k}} + O(ns)^{\frac{2}{k}} \)  \\
=& (N!)^{-\frac{k-1}{k}} s^{-\frac{k-1}{k}N} \exp\( \frac{s^{\frac{1}{k}} }{k+1} N^{\frac{1+k}{k}}
  + O\((sN)^{\frac{1}{k}}\) \) \\
=& (2\pi)^{-\frac{k-1}{2k}} (Ns)^{-N\frac{k-1}{k}} e^{N \(1-\frac{1}{k}\)}\\
& \hspace{.4in}
\times \exp\( - \frac{k-1}{2k} \log(N) + \frac{1}{k+1} s^{\frac{1}{k}} N^{\frac{k+1}{k}}
  + O\( (sN)^{\frac{1}{k}} \) \).
\end{align*}
Theorem \ref{thm:runup} gives the result.
\end{proof}

\section{After the run-up}\label{sec:Late}

In the previous section, we computed $\wt{V}(N) = \prod_{n=1}^N m(n) {\bf e}_1$. In this section, we evaluate
$$G_k(q) =  {\bf e}_1^T \prod_{n=N}^\infty m(n)\ \wt{V}(N) $$
We have the following proposition which shows that we only need to consider the eigenvalues and the
first entry in each of the transition matrices.
\begin{theorem}\label{prop:transitionapproximation}
In the notation from Lemma \ref{lem:transitionmatrix} for $N$ an integer bigger than a sufficiently large multiple of $s^{-\frac{1}{k+1}}\log(s^{-1})^{\frac{k}{k+1}}$
we have
\begin{align*}
G_k(q) =& \prod_{n=N}^\infty \lambda_1(n) z(n) \cdot \prod_{n=N}^\infty T(n)^{1,1} \cdot \wt{v_0}(N)\cdot
\(1+ O\(s+ N^{\frac{-k-1}{k}} s^{\frac{-1}{k}}\) \).
\end{align*}
\end{theorem}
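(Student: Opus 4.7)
The plan is to diagonalize each matrix $m(n)=A(n)D(n)A(n)^{-1}$ and reinterpret the matrix product in the slowly varying eigenbasis, so that the ``primary channel'' (associated to the eigenvalue $\lambda_1(n) z(n)$) dominates. Writing
$$G_k(q) = \lim_{M\to\infty}{\bf e}_1^T \prod_{n=N+1}^M m(n)\cdot \wt V(N),$$
and inserting $A(n)^{-1}A(n-1) = T(n-1)$ between consecutive factors (using the notation of Lemma~\ref{lem:transitionmatrix}), I would obtain
$${\bf e}_1^T\,A(M)\,D(M)\,T(M-1)\,D(M-1)\,T(M-2)\cdots D(N+1)\,\vec c_{N+1},$$
where $\vec c_{N+1}:=A(N+1)^{-1}\wt V(N)$. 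Since the first row of each $A(n)$ is $(1,\ldots,1)$, we have ${\bf e}_1^T A(M)=(1,\ldots,1)$, and thus $G_k(q)=\lim_{M\to\infty}\sum_j (\vec u_M)_j$, where $\vec u_M:=D(M)T(M-1)\cdots D(N+1)\vec c_{N+1}$.

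Next I would analyze $\vec c_{N+1}$ via the identity $\vec c_{N+1} = T(N)\vec c_N$ with $\vec c_N:=A(N)^{-1}\wt V(N)$. Proposition~\ref{prop:runupInEigenbasis} supplies $(\vec c_N)_1 = \wt v_0(N)(1+O(\Delta))$ and $|(\vec c_N)_j|\ll \wt v_0(N)\,\Delta$ for $j>1$, where $\Delta:=sN^2+s^{2/k}N^{(k+2)/k}$. Combining this with $T(N)=I_k+O(s+N^{-1})$ from Proposition~\ref{prop:transitionmatrix} (and noting that $s+N^{-1}$ is dominated by $\Delta$ in the range of $N$ considered) gives the same bounds for $\vec c_{N+1}$.

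The main step is to expand $(\vec u_M)_j$ as a sum over label paths $(j_{N+1},\ldots,j_M=j)$:
$$(\vec u_M)_j = \sum_{\text{paths}}(\vec c_{N+1})_{j_{N+1}}\cdot\prod_n z(n)\lambda_{j_n}(n)\cdot\prod_n T(n)^{j_{n+1},j_n},$$
and to show that the constant path $j_n\equiv 1$ contributes the claimed main term $(\vec c_{N+1})_1\cdot\prod_n z(n)\lambda_1(n)\cdot\prod_n T(n)^{1,1}$. Every ``defect'' step $n$ with $j_n\neq 1$ picks up suppression from at least one of: (i)~the initial non-primary components of $\vec c_{N+1}$, each bounded by $\wt v_0(N)\,\Delta$; (ii)~an off-diagonal entry $T(n)^{i,j}=O(s+1/n)$ (Proposition~\ref{prop:transitionmatrix}); (iii)~the eigenvalue ratio $|\lambda_{j_n}(n)/\lambda_1(n)|$, which is $\exp(-c(ns)^{1/k})$ when $ns\ll 1$ (Lemma~\ref{lem:ratio}) and $O(e^{-ns})$ when $ns\gg 1$ (from Lemma~\ref{lem:primaryEigenvalueBound}, since $|z\lambda_j|\asymp|z|$ while $|z\lambda_1|\asymp 1$ in the small-$z$ regime). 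Summing the resulting geometric series over all paths with one or more defects yields the relative error $O(s+N^{-(k+1)/k}s^{-1/k})$.

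The final step is to pass to $M\to\infty$, checking that the non-primary endpoints $(\vec u_M)_j$ for $j>1$ vanish (since $\prod z(n)|\lambda_j(n)|$ decays exponentially for $j>1$) and that the primary-path products stabilize to the infinite products in the statement. The main obstacle will be the combinatorial bookkeeping in the defect-path sum: the eigenvalue-ratio suppression (iii) is weakest in the transition region $ns\sim 1$, and one must combine it carefully with the off-diagonal $T(n)$ suppression (ii) and the initial non-primary bound (i) to close the estimate uniformly over all defect patterns and all depths of excursion into the non-primary subspace.
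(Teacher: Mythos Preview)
Your approach is correct and will yield the theorem, but it is genuinely different from the paper's proof. The paper does not expand the matrix product as a sum over index paths. Instead, it introduces $w(n):=A(n)^{-1}\prod_{i=1}^{n-1}m(i){\bf e}_1$ and first proves inductively (Lemma~\ref{lem:Domination}) that the scalar ratio $r_n:=|w(n)_i|/|w(n)_1|$ satisfies a one--step recursion
$$r_{n+1}\le O(n^{-1})+r_n\bigl(1-\Omega((ns)^{1/k})\bigr)\quad(ns\ll1),\qquad r_{n+1}\le O(s)+r_n(1-\epsilon)\quad(ns\gg1),$$
whose ``fixed point'' is exactly $O(n^{-(k+1)/k}s^{-1/k}+s)$. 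With that ratio bound in hand, the paper then shows step by step that $w(n+1)_1=w(n)_1\,\lambda_1(n)z(n)\,T(n)^{1,1}\bigl(1+O(\min(n^{-(2k+1)/k}s^{-1/k},\,s^2z(n)))\bigr)$ and sums the per--step errors.

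Both arguments rest on the same three ingredients you list --- the off--diagonal bound on $T(n)$, the eigenvalue--ratio suppression, and the run--up decomposition of $\wt V(N)$ --- but the paper's inductive packaging completely sidesteps the ``combinatorial bookkeeping'' you flag at the end. In your path expansion one must control multi--excursion paths and paths that linger at a fixed non--primary index (picking up only diagonal $T(n)^{j,j}=1+O(1/n)$ factors, whose product over a long block can be polynomially large); this is doable, and your back--of--the--envelope geometric--series heuristic does close to give $O(s+N^{-(k+1)/k}s^{-1/k})$, but it takes real work. The paper's two--line recursion for $r_n$ encodes the same cancellation and delivers the bound without ever enumerating excursions. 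Conversely, your path picture is more transparent about \emph{where} the error actually originates (single excursions starting at time $n$ contribute $O(n^{-(2k+1)/k}s^{-1/k})$ when $ns\ll1$ and $O(s^2z(n))$ when $ns\gg1$), which is exactly the per--step error the paper writes down.
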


In order to prove Theorem \ref{prop:transitionapproximation} we will need the following lemma.
\begin{lemma}\label{lem:Domination}
Let $w(n) := A(n)^{-1} \prod_{i=1}^{n-1} m(i) {\bf e}_1$.  Then for $n$ bigger than a sufficiently large multiple of $s^{-\frac{1}{k+1}}\log(s^{-1})^{\frac{k}{k+1}}$,
we have that for $i\neq 1$ that
$$|w(n)_i| \leq O(n^{-\frac{k+1}{k}}s^{-\frac{1}{k}} + s)|w(n)_1|.$$
\end{lemma}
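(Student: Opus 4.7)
The plan is to work in the basis of eigenvectors of $m(n)$. Since $m(n) = A(n) D(n) A(n)^{-1}$ and $T(n) = A(n+1)^{-1} A(n)$, the vector $w(n) = A(n)^{-1} \prod_{i=1}^{n-1} m(i){\bf e}_1$ satisfies the recursion
\begin{equation*}
w(n+1) = A(n+1)^{-1} m(n) A(n)\, w(n) = T(n) D(n)\, w(n).
\end{equation*}
I will prove the stronger statement that for some constant $C'$ depending only on $k$,
\begin{equation*}
|r_i(n)| \le C' b(n), \qquad r_i(n) := w(n)_i/w(n)_1, \quad b(n) := n^{-(k+1)/k}s^{-1/k} + s,
\end{equation*}
holds for all $i \ne 1$ and all $n \ge N_0$, where $N_0$ is taken to be a sufficiently large multiple of $s^{-1/(k+1)}\log(s^{-1})^{k/(k+1)}$. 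The argument is by induction on $n$.

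For the base case, Proposition \ref{prop:runupInEigenbasis} shows that the coefficients of $\wt V(N_0)$ in the eigenbasis of $m(N_0)$ satisfy $|C_{N_0}^i/C_{N_0}^1| = O(sN_0^2 + s^{2/k}N_0^{(k+2)/k}) = O(s^{1/(k+1)}\log(s^{-1})^{(k+2)/(k+1)})$ for $i \ne 1$. Writing $w(N_0+1) = T(N_0)(C_{N_0}^i)_i$ and applying $T(N_0) = I + O(s + 1/N_0)$ from Proposition \ref{prop:transitionmatrix} transfers this bound to $w(N_0+1)$ with only an additive $O(s + 1/N_0)$ worsening. Since $b(N_0) \approx c_0^{-(k+1)/k}\log(s^{-1})^{-1}$ for $N_0 = c_0 s^{-1/(k+1)}\log(s^{-1})^{k/(k+1)}$, and $s^{1/(k+1)}$ beats any logarithmic factor, the base case holds with a large margin.

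The inductive step combines Proposition \ref{prop:transitionmatrix} (giving $T(n)^{i,j} = \delta_{i,j} + O(s + 1/n)$) with Lemma \ref{lem:ratio} (giving $|\lambda_i(n)/\lambda_1(n)| \le e^{-c(ns)^{1/k}}$ for $i \ne 1$). Dividing the $i$-th and first components of the recursion and applying the inductive hypothesis to control all cross-terms yields
\begin{equation*}
|r_i(n+1)| \le e^{-c(ns)^{1/k}} C' b(n) + C_2(s + 1/n),
\end{equation*}
for a constant $C_2$ depending on $C'$ and $k$. The induction closes provided
\begin{equation*}
C'\bigl(b(n+1) - e^{-c(ns)^{1/k}} b(n)\bigr) \ge C_2(s + 1/n),
\end{equation*}
and verifying this inequality uniformly for $n \ge N_0$ is the main technical step — and the principal obstacle — of the proof. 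I plan to handle it by splitting into two regimes. When $ns \lesssim 1$, Taylor expansion gives $1 - e^{-c(ns)^{1/k}} \sim (ns)^{1/k}$ and $b(n)$ is dominated by $n^{-(k+1)/k}s^{-1/k}$, so that $b(n)(1-e^{-c(ns)^{1/k}}) \sim 1/n$, which dominates $s + 1/n$ because $s \lesssim 1/n$ in this range. When $ns \gtrsim 1$, $1-e^{-c(ns)^{1/k}}$ is bounded below by a positive constant and $b(n) \sim s$, so $b(n)(1-e^{-c(ns)^{1/k}}) \sim s$, which dominates $s + 1/n$ because $1/n \lesssim s$ here. Choosing $C'$ sufficiently large relative to $C_2$ then completes the induction and establishes the desired bound.
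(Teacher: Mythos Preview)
Your approach is essentially the same as the paper's: both use the recursion $w(n+1)=T(n)D(n)w(n)$, invoke Proposition \ref{prop:runupInEigenbasis} for the base case, and close an induction by combining the contraction $|\lambda_i/\lambda_1|\le 1-\Omega((ns)^{1/k})$ with $T(n)=I+O(s+1/n)$, splitting into the regimes $ns\lesssim 1$ and $ns\gtrsim 1$. The only cosmetic difference is that the paper treats the two regimes separately (obtaining the $n^{-(k+1)/k}s^{-1/k}$ bound first and the $s$ bound afterward), whereas you package them into the single target $b(n)=n^{-(k+1)/k}s^{-1/k}+s$; two small points worth tightening are that Lemma \ref{lem:ratio} as stated covers only $ns\ll 1$ (for $ns\gtrsim 1$ use Lemma \ref{lem:primaryEigenvalueBound} to get $|\lambda_i/\lambda_1|\le 1-\epsilon$), and that your $C_2$ should be observed to be independent of $C'$ once $C'b(n)\le 1$, so that ``$C'$ large relative to $C_2$'' is not circular.
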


\begin{proof}[Proof of Lemma \ref{lem:Domination}]
The proof is by induction on $n$.
Proposition \ref{prop:runupInEigenbasis}
makes this result clear for $n$ at the lowest end of the permissible range.  The basic idea here is that
$$w(n+1) = T(n)D(n)w(n).$$
Now since $|\lambda_1(n)| > |\lambda_i(n)|$, multiplication by $D(n)$ increases the ratio of the first entry relative to the other entries.  Since $T(n)$ is approximately $I$, multiplication by $T(n)$ does not worsen this ratio by too much.

We begin by proving our claim for $ns \ll 1$.
Letting
\begin{equation}
u(n):=D(n)w(n)
\end{equation}
 and applying Lemma \ref{lem:primaryEigenvalueBound},
we have that $$\frac{|u(n)_i|}{|u(n)_1|} \leq \frac{|w(n)_i|}{|w(n)_1|} ( 1- \Omega((ns)^{\frac{1}{k}})).$$
Next, since $T(n) = I _k+ O(n^{-1})$, and since $|u(n)_i| < k|u(n)_1|$, we have that
$$
\frac{|w(n+1)_i|}{|w(n+1)_1|} = O(n^{-1}) + \left(\frac{|w(n)_i|}{|w(n)_1|} \right)(1-\Omega((ns)^{\frac1k})).
$$
Induction on $n$ gives $$|w(n)_i| \leq O(n^{-\frac{k+1}{k}}s^{-\frac{1}{k}})|w(n)_1|$$
for all $n\ll s^{-1}$.

The argument for $ns \gg 1$ is similar.  It should be noted that in this range that $\frac{|\lambda_i(n)|}{|\lambda_1(n)|}$ is bounded above by some constant less than 1 (say by $1-\epsilon$).  Therefore, we have that
$$
\frac{|w(n+1)_i|}{|w(n+1)_1|} = O(s) + \left(\frac{|w(n)_i|}{|w(n)_1|} \right)(1-\epsilon).
$$
From this, it is easy to conclude by induction that $|w(n)_i| = O(s) |w(n)_1|$.
\end{proof}
\begin{remark}
It should be noted that the bound in Lemma \ref{lem:Domination} is not tight for small
$n$ (a stronger bound is given in Proposition \ref{prop:runupInEigenbasis}).  The bound of
$n^{-\frac{k+1}{k}}s^{-\frac{1}{k}}$ would be tight given our analysis if all
we use is that $T(n)^{1,i} = O(n^{-1})$
and that $\abs{ \frac{\lambda_i(n)}{\lambda_1(n)}} =
1-\Omega((ns)^{\frac1k})$.  In order to obtain a tighter analysis, one can
note that the $T(n)^{1,j}$ are roughly constant in $n$ and that
$\frac{\lambda_i}{\lambda_1}$ is roughly $\omega^i$, where $\omega$ is a primitive $k$th
root of unity.  By our previous analysis, $\frac{w_i(n+1)}{w_1(n+1)}$ is
approximately $\frac{\lambda_i(n)}{\lambda_1(n)}\( T(n)^{1,i} + \(\frac{w_i(n)}{w_1(n)}\) \).$
Approximating each $\frac{\lambda_i}{\lambda_1}$ by $\omega^i(1-(ns)^{\frac{1}{k}})$ and each $T(n)^{1,i}$ by a
constant of order $n^{-1}$, we note that resulting recurrence leads to terms of
size $O(n^{-1})$
due to cancelation that is not captured in our analysis.
\end{remark}

We are now prepared to prove Proposition \ref{prop:transitionapproximation}.
\begin{proof}[Proof of Theorem \ref{prop:transitionapproximation}]
We claim that
$$
w(n+1)_1 = w(n)_1 \lambda_1(n)z(n)T(n)^{1,1}(1+O(\min(n^{-\frac{2k+1}{k}}s^{-\frac{1}{k}},s^2 z(n)))).
$$
Or equivalently (since $u(n)_1 = \lambda_1(n)z(n) w(n)_1$) that
$$
w(n+1)_1 = u(n)_1 T(n)^{1,1}(1+O(\min(n^{-\frac{2k+1}{k}}s^{-\frac{1}{k}} , s^2 z(n)))).
$$
It is clear that
$$
w(n+1)_1 = \sum_j T(n)^{1,j} u(n)_j
$$
Hence we need to show
$$
\max_{j\neq 1} \( T(n)^{1,j} \cdot \frac{ |u(n)_j|}{|u(n)_1|}\) = O(\min(n^{-\frac{2k+1}{k}}s^{-\frac{1}{k}} + s, s^2 z(n))).
$$
If $ns\ll 1$, this follows since $T(n)^{1,j} \ll n^{-1}$, and
$\frac{|u(n)_j|}{|u(n)_1|} \leq \frac{|w(n)_j|}{|w(n)_1|} = O(n^{-\frac{k+1}{k}}s^{-1}).$  Otherwise, this follows from noting that
$T(n)^{1,j} \ll s$ and $$\frac{|u(n)_j|}{|u(n)_1|} = \left( \frac{|\lambda_j(n)|}{|\lambda_1(n)|}\right)
\left(\frac{ |w(n)_j|}{|w(n)_1|} \right) = O(z(n) s).$$
 This proves the claim.

Therefore we have that
$$
\lim_{n\rightarrow\infty} w(n)_1 = \prod_{n=N+1}^\infty \lambda(n)z(n)T(n)^{1,1} \cdot
\exp\left( O\left( \sum_{n=N+1}^\infty \min(n^{-\frac{2k+1}{k}}s^{-\frac1k}, s^2 z(n))\right) \right).
$$
The sum in the error term is at most
$$
\sum_{n=N+1}^{\lfloor  s^{-1}\rfloor } n^{-\frac{2k+1}{k}} s^{-\frac{1}{k}} + \sum_{n= \lfloor s^{-1} \rfloor}^\infty s^2 z(n).
$$
The first term is
$
O\(N^{-\frac{k+1}{k}}s^{-\frac{1}{k}}\)
$
and the latter term is
$
O\left(s^2\sum_{n=1}^\infty e^{-ns}\right) = O(s).
$
\end{proof}

The following theorem is enough to deduce
Theorem \ref{thm:partitionAsymp} and thus Theorem \ref{thm:main}
\begin{theorem}\label{thm:transitionProduct}
With $N$ as above we have
$$\prod_{n=N}^\infty T(n)^{1,1}  = k^{\frac{1}{2}}  \exp\( -\frac{k-1}{2k} \log\(Ns\)
+ O\( (Ns)^{\frac{1}{k}} + N^{-1} + s\)\).$$
\end{theorem}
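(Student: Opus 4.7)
The plan is to estimate $\log T(n)^{1,1}$ by splitting the range $n\geq N$ into two regimes separated by the transition $ns\sim 1$, with the main logarithmic contribution arising from the small-$ns$ regime. A naive discrete-telescoping approach using the leading approximation $T(n)^{1,1}\approx (\lambda_1(n+1)/\lambda_1(n))^{k-1}\,Q(\lambda_1(n))/Q(\lambda_1(n+1))$ (with $Q(x)=x^{k-1}+\cdots+1$) collapses by telescoping to $(Ns)^{-(k-1)/k}$, overshooting the target exponent by a factor of two. This indicates that the second-order correction in the discrete step size is essential and is precisely the origin of the exponent $(k-1)/(2k)$ in the statement.

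In the small regime $N\leq n\ll s^{-1}$, I would apply Lemma \ref{lem:primaryEigenvalueBound} to expand $\lambda_j(n)=\omega_j z(n)^{-1/k}(1+\omega_j z(n)^{-1/k}/k+O(z(n)^{-2/k}))$, where $\omega_1=1$ and the remaining $\omega_j$ are the nontrivial $k$-th roots of unity. Substituting these expansions into
\begin{equation*}
\log T(n)^{1,1} = (k-1)\log\frac{\lambda_1(n+1)}{\lambda_1(n)} + \sum_{m\neq 1}\log\frac{\lambda_1(n)-\lambda_m(n+1)}{\lambda_1(n+1)-\lambda_m(n+1)}
\end{equation*}
and Taylor expanding in the small parameters $1/n$ and $(ns)^{1/k}$, the $1/n$-coefficient of each $m$-summand reduces to $-1/(k(1-\omega_m))$. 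Applying the classical identity $\sum_{m=2}^{k}(1-\omega_m)^{-1}=(k-1)/2$ and combining with the $(k-1)/(kn)$ contribution from the first term yields
\begin{equation*}
\log T(n)^{1,1} = \frac{k-1}{2kn} + O\left(\frac{(ns)^{1/k}}{n}+\frac{1}{n^2}+s\right).
\end{equation*}
In the large regime $n\gg s^{-1}$, $\lambda_1\approx z^{-1}$ is large while the $\lambda_m$ for $m\neq 1$ remain near nontrivial $k$-th roots of unity; direct expansion then shows that the $(k-1)s$ contribution from $(\lambda_1(n+1)/\lambda_1(n))^{k-1}$ cancels against the $-s$ contributions from the $k-1$ other factors, leaving $\log T(n)^{1,1}=O(se^{-ns})$, whose sum over $n$ is $O(1)$.

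Choosing a cutoff $n_*\sim s^{-1}$ and combining the two regimes, the main term becomes $\frac{k-1}{2k}\log(n_*/N)=-\frac{k-1}{2k}\log(Ns)+O(1)$, matching the statement. The principal obstacle is pinning down the constant $k^{1/2}$ precisely: this requires careful matching at the transition and summing subleading terms such as $(ns)^{1/k}/n$ (which contribute $O((n_*s)^{1/k})=O(1)$ together with an error of size $O((Ns)^{1/k})$). A plausible source for $\log k$ (twice the needed $\log k^{1/2}$) is the product identity $\prod_{m=2}^{k}(1-\omega_m)=k$, obtained from $x^{k-1}+\cdots+1=\prod_{m}(x-\omega_m)$ evaluated at $x=1$; the additional factor of $1/2$ would arise from an Euler-Maclaurin-type correction when the discrete sum is approximated by an integral. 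Ensuring the stated error bound $O((Ns)^{1/k}+N^{-1}+s)$ requires uniform control of all subleading contributions through the transition region.
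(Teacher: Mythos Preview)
Your root-of-unity computation correctly recovers the leading exponent $-\frac{k-1}{2k}\log(Ns)$, but the constant $k^{1/2}$ is genuinely undetermined by the regime-splitting strategy as you lay it out: summing your small-regime remainders $O((ns)^{1/k}/n)$ and $O(s)$ up to a cutoff $n_*\sim s^{-1}$ already produces $O(1)$ contributions, so the constant is hidden inside what you have declared to be error, and the matching at $n_*$ cannot be carried out to the required precision without essentially redoing the whole calculation. Your telescoping polynomial $b(x)=x^{k-1}+\cdots+1$ is also the wrong object: it encodes only the small-$z$ limiting positions of the non-primary eigenvalues and misses the factor of $2$ intrinsic to the second-order nature of the approximation, which is exactly why it overshoots the exponent by a factor of two.

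The paper avoids any cutoff in the main term. With $P(\lambda,z)=\lambda^k-z^{-1}(\lambda^{k-1}+\cdots+1)$ one has, uniformly in $n$,
\[
\log T(n-1)^{1,1}=-\lambda_1'\Bigl(\tfrac12 R_k(\lambda_1)-\tfrac{k-1}{\lambda_1}\Bigr)+O\Bigl(\tfrac{\lambda_1''}{\lambda_1}+\bigl(\tfrac{\lambda_1'}{\lambda_1}\bigr)^2\Bigr),
\qquad R_k(\lambda):=\frac{P_{\lambda\lambda}}{P_\lambda}=2\sum_{m\ne 1}\frac{1}{\lambda_1-\lambda_m},
\]
and the second-order errors sum to $O(N^{-1}+s)$. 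Euler--Maclaurin and the substitution $n\mapsto\lambda_1$ reduce the main sum to $\int_{\lambda_1(N)}^\infty\bigl(\tfrac12 R_k(x)-\tfrac{k-1}{x}\bigr)\,dx$. The decisive step you are missing is an exact antiderivative: along the branch $\lambda=\lambda_1$ one has $z^{-1}=a(\lambda)/b(\lambda)$ with $a=\lambda^k$, $b=\lambda^{k-1}+\cdots+1$, whence
\[
R_k(\lambda)=\frac{a''b-ab''}{a'b-ab'}=\frac{d}{d\lambda}\log Q(\lambda),
\qquad Q(\lambda):=a'b-ab'=\lambda^{2k-2}+2\lambda^{2k-3}+\cdots+k\lambda^{k-1}.
\]
Thus the integral is $\tfrac12\bigl[\log(Q(\lambda)\lambda^{2-2k})\bigr]_{\lambda_1(N)}^\infty$; the upper limit is $0$, while $Q(\lambda)\sim k\lambda^{k-1}$ as $\lambda\to 0$ gives $\tfrac12\log k-\tfrac{k-1}{2}\log\lambda_1(N)=\tfrac12\log k-\tfrac{k-1}{2k}\log(Ns)+O((Ns)^{1/k})$ at the lower limit. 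The constant $k^{1/2}$ thus drops out of the lowest-degree coefficient of the \emph{correct} $Q$; your identity $\prod_{m\ne 1}(1-\omega_m)=k$ is the relation $b(1)=k$, which is related but not the operative mechanism.
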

\begin{proof} Throughout this proof we use the notation of Lemma \ref{lem:transitionmatrix} and often suppress the dependence
on $n$.
We have
$$T(n-1)^{1,1} = \prod_{m\ne 1} \frac{\mu_1 - \lambda_m}{\lambda_1 - \lambda_m}  \cdot \frac{\lambda_1}{\mu_1}$$
and
$$\mu_1(n) = \lambda_1(n-1)  = \lambda_1(n) - \lambda_1'(n) + O( \lambda_1''(n)) $$
where $\lambda_1'(n) = \frac{\partial }{\partial n} \lambda_1(n)
$.
Therefore,
$$\frac{\mu_1 - \lambda_m}{\lambda_1 - \lambda_m}  \cdot \frac{\lambda_1}{\mu_1}
= 1+  \lambda_1' \( \frac{1}{\lambda_1 - \lambda_m} - \frac{1}{\lambda_1}\)
+  O_k\( \( \frac{\lambda_1''}{\lambda_1} + \(\frac{\lambda_1'}{\lambda_1}\)^2\)\)$$
Hence,
$$T(n-1)^{1,1} = \exp\(  -\lambda_1' \sum_{m\ne 1}\( \frac{1}{\lambda_1 - \lambda_m} - \frac{1}{\lambda_1}\) + O_k\(
  \( \frac{\lambda_1''}{\lambda_1} + \(\frac{\lambda_1'}{\lambda_1}\)^2\)\)\).$$
To estimate the big-$O$ term for $ns\ll 1$ we use  \eqref{eqn:firstderivative} and \eqref{eqn:secondderivative} and
Lemma \ref{lem:primaryEigenvalueBound} to obtain
\begin{align*}
\frac{1}{\lambda_1} \frac{\partial \lambda_1}{\partial n}  =& -s \frac{1}{\lambda_1} \frac{\partial \lambda_1}{\partial z} \cdot z(n)^2e^{ns}  =
O\(\frac{1}{n} \) \\
\frac{1}{\lambda_1} \frac{\partial^2 \lambda_1}{\partial n^2}  =& \frac{s^2e^{2ns}}{\lambda_1}
\( \frac{\partial^2 \lambda_1 }{\partial z^2} \cdot z(n)^4 + \frac{\partial \lambda_1}{\partial z} \cdot  z(n)^3 \)
= O\( \frac{1}{n^2}\).
\end{align*}
For $ns\gg 1$ we use Lemma \ref{lem:bound2} to obtain
\begin{align*}
\frac{1}{\lambda_1} \frac{\partial \lambda_1}{\partial n}
 =  O\(se^{-ns}\) \ \ \text{ and } \ \
\frac{1}{\lambda_1} \frac{\partial^2 \lambda_1}{\partial n^2}
 = O\( s^2 e^{-ns}\).
\end{align*}
Therefore
\begin{align*}
\prod_{n=N}^\infty T(n-1)^{1,1} \exp\(\lambda_1' \sum_{m\ne 1} \( \frac{1}{\lambda_1 - \lambda_m} - \frac{1}{\lambda_1}\)\)
=& \exp\( \sum_{n=N}^{\lfloor \frac{1}{s}\rfloor} O\(\frac{1}{n^2}\) + O\( s^2 \sum_{n=\lfloor \frac{1}{s}\rfloor}^\infty e^{-ns}\)\) \\
=& \exp\( O\(\frac{1}{N} + s\)\).
\end{align*}

Let
$P(\lambda, z):= \lambda^k - z^{-1} \( \lambda^{k-1} + \cdots + \lambda+1\).$
 We have
\begin{equation}\label{eqn:transitionIntegral}
2\sum_{m\ne 1} \frac{1}{\lambda_1 - \lambda_m} = \frac{\frac{\partial^2}{\partial \lambda^2} P(\lambda, z)}{\frac{\partial}{\partial \lambda} P(\lambda, z)} \big |_{\begin{subarray}{c} \lambda  = \lambda_1 \\ z = z(N) \end{subarray}}
=: R_k(\lambda_1).
\end{equation}
Therefore,
\begin{align*}
\prod_{n=N}^\infty T(n)^{1,1}
=& \prod_{n=N}^\infty T(n-1)^{1,1}(1+O(N^{-1}+s)) \\
&= \exp\( - \sum_{n=N}^\infty \(\frac{1}{2} \lambda_1'(n) R_k(\lambda_1(n)) - (k-1) \frac{\lambda_1'(n)}{\lambda_1(n)}\)
+ O\( N^{-1} + s \) \)
\end{align*}
We apply Euler-MacLaurin to approximate the sum by an integral.
The error from the terms $\frac{\lambda_1'(n)}{\lambda_1(n)}$ introduces an error of size
$$\int_N^{\infty} \( \frac{\lambda_1''(n)}{\lambda_1(n)} + \(\frac{\lambda_1'(n)}{\lambda_1(n)}\)^2 \) dn = O\(N^{-1} + s\)$$
 as above.
Thus, we have
\begin{align*}
\prod_{n=N}^\infty T(n)^{1,1}
=&\exp\( -\int_{N}^\infty \( \frac{1}{2} \lambda_1'(x) R_k(\lambda_1(x)) - (k-1) \frac{\lambda_1'(x)}{\lambda_1(x)}\)dx
  + O\(N^{-1} + s\) \)\\
=& \exp\( - \int_{\lambda_1(N)}^\infty \frac{R_k(x)}{2} - \frac{k-1}{x}dx + O\( N^{-1} + s\)\)
\end{align*}
In order to evaluate the integral
$
\int R_k(x)dx,
$
we let $a(\lambda)=\lambda^k$ and $b(\lambda)=\lambda^{k-1}+\cdots+1$.  We then have that $z^{-1}=\frac{a(\lambda_1)}{b(\lambda_1)}.$  Therefore,
$$
R_k(\lambda) = \frac{a''(\lambda)-z^{-1}b''(\lambda)}{a'(\lambda)-z^{-1}b'(\lambda)} = \frac{a''(\lambda)b(\lambda)-a(\lambda)b''(\lambda)}{a'(\lambda)b(\lambda)-a(\lambda)b'(\lambda)} = \frac{\partial}{\partial \lambda}\log( a'(\lambda)b(\lambda)-a(\lambda)b'(\lambda) ).
$$
Letting
\begin{align*}
Q(\lambda) & := a'(\lambda)b(\lambda)-a(\lambda)b'(\lambda)\\ & = k\lambda^{k-1}(\lambda^{k-1}+\cdots+1) - \lambda^{k}((k-1)\lambda^{k-2}+\cdots+1)\\ & = \lambda^{2k-2} +2\lambda^{2k-3} + \cdots + k\lambda^{k-1},
\end{align*}
we have that
\begin{align*}
\int_{\lambda_1(N)}^\infty \frac{R_k(x)}{2} - \frac{k-1}{x}dx & = \frac{1}{2}\left[ \log\left(Q(\lambda)\lambda^{-2k+2} \right)\right]_{\lambda_1(N)}^\infty.
\end{align*}
We note that for $\lambda \gg 1$ that $Q(\lambda)\lambda^{-2k+2}=1+O(\lambda^{-1})$, and therefore, $$\lim_{\lambda\rightarrow\infty}\log\left(Q(\lambda)\lambda^{-2k+2} \right)=0.$$
For $\lambda \ll 1$, we have that $Q(\lambda)\lambda^{-2k+2}=k\lambda^{-k+1}(1+O(\lambda)).$
Therefore
\begin{align*}
\prod_{n=N}^\infty T(n)^{1,1} &= \exp\left(\frac{1}{2} \log\left(k\lambda_1(N)^{-k+1}(1+O(\lambda_1(N))) \right) + O\( N^{-1} + s\) \right).
\end{align*}
By Lemma \ref{lem:primaryEigenvalueBound}, we have
\begin{align*}
\prod_{n=N}^\infty T(n)^{1,1}  =&
\exp\( -\frac{k-1}{2} \log\(\lambda_1(N)\) + \frac{1}{2} \log(k)  +  O\( (Ns)^{\frac{1}{k}}  + N^{-1} + s\)\) \\
=& \exp\( -\frac{k-1}{2k} \log\(Ns\) + \frac{1}{2} \log(k)  - \frac{k-1}{2k} (Ns)^{\frac{1}{k}} +  O\( (Ns)^{\frac{1}{k}}  + N^{-1} + s\)\).
\end{align*}
\end{proof}

In the next section, we analyze the product of the primary eigenvalues.
\section{The Product of the Primary Eigenvalues}\label{sec:Eigenvalues}
In this section, we estimate
$$\prod_{n=1}^\infty \lambda_1(n) z(n)  =
\exp\( \sum_{n=1}^\infty  \log(\lambda_1(n)) + \log \( \frac{q^n}{1-q^n}\) \).$$

\begin{theorem}\label{thm:eigenvalueProduct}
In the notation above we have
\begin{align*}
 \sum_{n=1}^\infty  \log\(\lambda_1(n) z(n)\)   =&  \frac{\pi^2}{6s} \( 1- \frac{2}{k(k+1)}\) +
\(\frac{k-1}{2k}\) \log(s) - \(\frac{k-1}{2k}\) \log(2\pi)
 +  O_k\(s^{\frac{1}{k}}\).
\end{align*}
\end{theorem}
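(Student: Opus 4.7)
The approach is to view the sum as a Riemann sum for $\tfrac{1}{s}\int_0^\infty \phi(y)\,dy$, where $\phi(y) = \log(z(y)\lambda_1(y))$ with $z(y) = (e^y-1)^{-1}$ and $y = ns$. By Lemma \ref{lem:primaryEigenvalueBound}, the identity $z\lambda_1 = 1 + \lambda_1^{-1} + \cdots + \lambda_1^{-(k-1)}$, and the Taylor expansion $\log z = -\log y - y/2 + O(y^2)$, one finds
$$\phi(y) = -\tfrac{k-1}{k}\log y + \tfrac{1}{k}y^{1/k} + O(y^{2/k}) \qquad \text{as } y\to 0^+,$$
while $\phi(y) = O(e^{-y})$ as $y\to\infty$ (since $\lambda_1 \sim z^{-1}\sim e^y$ there). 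Thus $\phi$ is integrable on $(0,\infty)$, and the heuristic is $\sum_{n\ge 1}\phi(ns) \approx \tfrac{1}{s}\int_0^\infty\phi\,dy$; the logarithmic singularity at the left endpoint should supply the $\tfrac{k-1}{2k}\log s - \tfrac{k-1}{2k}\log(2\pi)$ correction (via a Stirling/Mellin calculation) while the $y^{1/k}$ non-smoothness should account for the $O(s^{1/k})$ error.

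To evaluate the main integral, I would use the identity $-\log(1-e^{-y}) = y + \log z$ to write $\phi(y) = -\log(1-e^{-y}) + (\log\lambda_1 - y)$; since $\int_0^\infty -\log(1-e^{-y})\,dy = \pi^2/6$, it suffices to compute $J := \int_0^\infty(\log\lambda_1(y) - y)\,dy$. The characteristic equation $z = \sum_{j=1}^k \lambda_1^{-j}$ combined with $y = \log(1+z^{-1})$ yields the closed-form inversion $y(\lambda) = \log\frac{\lambda^{k+1}-1}{\lambda^k-1}$. Changing variable to $\lambda = \lambda_1$, splitting the integral into $0<\lambda<1$ and $\lambda>1$, and applying the substitution $\mu = 1/\lambda$ in the latter range, both pieces reduce to integrals on $(0,1)$. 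After integration by parts against $y'(\lambda)$, the boundary terms vanish (because $R(\lambda) := \log\frac{1-\lambda^{k+1}}{1-\lambda^k} = O(\lambda^k)$ at $0$ and $\log\lambda|_{\lambda=1}=0$), and the two pieces combine to
$$J = -2\int_0^1 \frac{\log(1-\lambda^{k+1}) - \log(1-\lambda^k)}{\lambda}\,d\lambda.$$
Expanding $\log(1-\lambda^m) = -\sum_{j\geq 1}\lambda^{jm}/j$ and integrating termwise yields $\int_0^1 \lambda^{-1}\log(1-\lambda^m)\,d\lambda = -\pi^2/(6m)$, so $J = -\pi^2/(3k(k+1))$ and hence $\int_0^\infty \phi\,dy = (\pi^2/6)(1 - 2/(k(k+1)))$, as required.

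For the subleading terms, I would decompose $\phi(y) = -\tfrac{k-1}{k}(\log y)e^{-y} + r(y)$, where $r$ is continuous with $r(y) = O(y^{1/k})$ near $0$ and exponential decay at $\infty$. Differentiating the Mellin--Barnes identity $\sum_{n\geq 1}n^\alpha e^{-ns} = \Gamma(\alpha+1)s^{-\alpha-1} + \zeta(-\alpha) + O(s)$ at $\alpha = 0$ (and using $\zeta(0) = -\tfrac12$, $\zeta'(0) = -\tfrac12\log(2\pi)$) gives $\sum_n \log(ns)\,e^{-ns} = -\gamma/s - \tfrac12\log s + \tfrac12\log(2\pi) + O(s)$, so the singular part of $\phi$ supplies the $\tfrac{k-1}{2k}\log s - \tfrac{k-1}{2k}\log(2\pi)$ correction together with a $(k-1)\gamma/(ks)$ term. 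For $r$, the same Mellin-transform method shows that $\widetilde r(\sigma)$ has a simple pole at $\sigma=-1/k$ (from the $\tfrac{1}{k}y^{1/k}$ term) whose residue contributes a $\zeta(-1/k)s^{1/k}/k$ correction, confirming the $O(s^{1/k})$ error. The two $\gamma/s$ contributions cancel against each other so that the leading term is $\tfrac{1}{s}\int_0^\infty\phi\,dy$. The main technical obstacle will be controlling the shifted Mellin--Barnes integral after the pole at $\sigma=-1/k$ is accounted for --- this requires standard vertical-line bounds on $\zeta$ and on $\widetilde r$ --- and verifying that the $O(y^{2/k})$ and higher corrections in the expansion of $\phi$ near $0$ contribute no worse than $O(s^{1/k})$ after summation.
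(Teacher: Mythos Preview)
Your outline is correct and takes a genuinely different route from the paper. For the main term $\tfrac{1}{s}\int_0^\infty\phi(y)\,dy$, the paper simply cites Theorem~1 of \cite{HLR} for the value $\int_0^\infty g_k=\pi^2/(3k(k+1))$, whereas your direct computation via the change of variable $y\mapsto\lambda_1$, the substitution $\mu=1/\lambda$ on $\lambda>1$, and the dilogarithm identity $\int_0^1\lambda^{-1}\log(1-\lambda^m)\,d\lambda=-\pi^2/(6m)$ is a clean self-contained derivation (and does work: the $y_0^2/2$ boundary contributions from the two half-ranges cancel, leaving exactly $J=-2\int_0^1 R(\lambda)\lambda^{-1}\,d\lambda$). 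For the subleading terms the methods diverge more substantially. The paper writes $\log(\lambda_1 z)=-g_k(ns)-\log(1-q^n)$, imports the $\tfrac12\log(2\pi)$ constant from the modular transformation of the Dedekind $\eta$-function applied to $\sum_n\log(1-q^n)$, and then handles $\sum_n g_k(ns)$ by the elementary Euler--MacLaurin identity of Lemma~\ref{lem:ml}, bootstrapping the awkward boundary integral $\int_{1/2}^\infty g_k'(xs)([x]-x+\tfrac12)\,dx$ from the corresponding integral for $\log(1-e^{-xs})$ via explicit pointwise bounds on $g_k'$ and $g_k''$. You instead peel off the singular piece $-\tfrac{k-1}{k}(\log y)e^{-y}$, extract the constant through $\zeta'(0)=-\tfrac12\log(2\pi)$, and treat the remainder $r$ by the contour representation $\sum_n r(ns)=\tfrac{1}{2\pi i}\int\widetilde r(\sigma)\zeta(\sigma)s^{-\sigma}\,d\sigma$; the $\gamma/s$ cancellation you note is precisely $\int_0^\infty r=\int_0^\infty\phi+\tfrac{k-1}{k}\gamma$. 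The paper's route is lower-tech after the $\eta$-function input and makes the $O(s^{1/k})$ error completely explicit; yours is more systematic and avoids the $\eta$ black box (trading it for the equivalent input $\zeta'(0)$), but the contour-shifting step you flag --- decay of $\widetilde r(\sigma)\zeta(\sigma)$ on vertical lines so that the integral along $\mathrm{Re}\,\sigma=-1/k-\epsilon$ is $o(s^{1/k})$ --- is the place where the remaining labor concentrates. That step is routine (integrate by parts in the definition of $\widetilde r$, using that $r$ is smooth on $(0,\infty)$ with controlled behavior at the endpoints), but it does need to be done.
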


We start with the following lemma which closely resembles Euler-MacLaurin summation.
\begin{lemma}\label{lem:ml}
For suitable functions $h$  and $n\ge 1$ we have
\begin{align*}
h(n) =& \int_{n-\frac{1}{2}}^{n+\frac{1}{2}} h(z) dz - \int_{n-\frac{1}{2}}^{n+\frac{1}{2}} h'(x) \( [x] - x + \frac{1}{2}\) dx\\
=&  \int_{n-\frac{1}{2}}^{n+\frac{1}{2}} h(z) dz -\frac{1}{2} \int_{n-\frac{1}{2}}^{n+\frac{1}{2}}  h''(x) \( [x]-x+\frac{1}{2}\)^2 dx
\end{align*}
where $[x]$ denotes the integer part of $x$.
\end{lemma}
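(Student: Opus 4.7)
The plan is to prove Lemma \ref{lem:ml} as an elementary localized version of Euler--Maclaurin summation, obtained by two successive integrations by parts on the unit interval $[n-\tfrac{1}{2},n+\tfrac{1}{2}]$. Set $\psi(x):=[x]-x+\tfrac{1}{2}$, the sawtooth function. Off of the integers one has $\psi'(x)=-1$, and on the interval in question $\psi$ vanishes at the endpoints $n\pm\tfrac{1}{2}$, is linear on $(n-\tfrac{1}{2},n)$ and on $(n,n+\tfrac{1}{2})$, and has a single jump from $-\tfrac{1}{2}$ to $+\tfrac{1}{2}$ at $x=n$.

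For the first identity, I would split $\int_{n-1/2}^{n+1/2} h'(x)\psi(x)\,dx$ as $\int_{n-1/2}^{n}+\int_{n}^{n+1/2}$ and integrate by parts on each piece. Because $\psi(n\pm\tfrac{1}{2})=0$, the only surviving boundary contributions come from the jump of $\psi$ at $n$: the left piece yields a boundary term $-h(n)/2$ and the right piece yields another $-h(n)/2$. Using $\psi'=-1$, the interior terms sum to $\int_{n-1/2}^{n+1/2}h(x)\,dx$. Combining, one obtains
\[
\int_{n-1/2}^{n+1/2} h'(x)\psi(x)\,dx \;=\; -h(n)+\int_{n-1/2}^{n+1/2}h(x)\,dx,
\]
which rearranges to the first equality.

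For the second identity, observe that $\Phi(x):=-\tfrac{1}{2}\psi(x)^2$ is continuous on all of $\mathbb{R}$ (the square $\psi^2$ has no jumps at integers since $(\pm\tfrac{1}{2})^2$ agree) and satisfies $\Phi'(x)=\psi(x)$ off of integers, with $\Phi(n\pm\tfrac{1}{2})=0$. A second integration by parts then gives
\[
\int_{n-1/2}^{n+1/2} h'(x)\psi(x)\,dx \;=\; \tfrac{1}{2}\int_{n-1/2}^{n+1/2} h''(x)\psi(x)^2\,dx,
\]
with no boundary contribution. Substituting into the first identity produces the second.

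The only subtlety — hardly an obstacle — is the careful bookkeeping at $x=n$ in the first integration by parts, where one must account for the jump of $\psi$; this is what forces the appearance of $h(n)$ on the left-hand side. After that, the fact that $\psi^2$ (unlike $\psi$) is continuous, together with the vanishing of $\psi$ at the endpoints $n\pm\tfrac{1}{2}$, is precisely what makes the second integration by parts boundary-term-free.
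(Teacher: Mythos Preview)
Your proof is correct and matches the paper's approach in spirit: both arguments reduce to integration by parts on the two half-intervals $[n-\tfrac12,n]$ and $[n,n+\tfrac12]$, with the jump of $\psi$ at $x=n$ producing the point value $h(n)$. The only organizational difference is that the paper obtains the second identity first, by integrating the Taylor expansion $h(z)=h(n)+h'(n)(z-n)+\int_n^z h''(x)(z-x)\,dx$ over $[n-\tfrac12,n+\tfrac12]$ and then switching the order of integration, whereas you obtain it from the first identity via a second integration by parts using the continuous antiderivative $\Phi=-\tfrac12\psi^2$; these are equivalent manipulations.
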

\begin{proof}
To see this note that
for any function $h(z)$ we have
$$h(z) = h(n) + h'(n) (z-n) + \int_n^z h''(x) (z-x) dx.$$
Integrating from $n-\frac{1}{2}$ to $n+ \frac{1}{2}$ gives the second result.
Integration by parts on each interval $[n,n+\frac{1}{2}]$ and $[n-1/2, n]$ gives
the result first result.
\end{proof}

Define the function $f_k(e^{-x})$ to be the increasing function satisfying
\begin{equation}
f_k(e^{-x})^{k+1} - f_k(e^{-x})^{k} = e^{-x(k+1)} - e^{-xk}.
\end{equation}
Since $\lambda_1(n)^k = z(n)^{-1} \( \lambda_1(n)^{k-1} + \cdots \lambda_1(n) + 1\)$, multiplying by $\lambda_1(n)-1$ we have
$\lambda_1(n)^{k+1} - \lambda_1(n)^k = z(n)^{-1} (\lambda_1(n)^k-1) = q^{-n} \lambda_1^k - q^{-n} - \lambda_1^k + 1$.
Therefore $f_k(e^{-ns}) = \lambda_1(n) q^n$.
\begin{remark}
This function $f_k(e^{-x})$, and certain generalizations, are studied  in \cite{HLR}.
\end{remark}

\begin{proof}[Proof of Theorem \ref{thm:eigenvalueProduct}]
The modularity of the Dedekind $\eta$-function gives
\begin{equation}\label{eqn:etaasymptotic}
\sum_{n=1}^\infty \log\(1-q^n\) = \frac{\pi^2}{6s} +\frac{1}{2} \log(s) - \frac{1}{2} \log(2\pi) - \frac{s}{24} + O(s^{M})
\end{equation}
for any $M>0$.
Additionally, by Lemma \ref{lem:ml}, we have
$$\sum_{n=1}^\infty \log\(1-q^n\)  = \int_0^\infty \log(1-e^{-xs}) dx - \int_0^\frac{1}{2} \log(1-e^{-xs}) dx
-s \int_\frac{1}{2}^\infty \frac{e^{-xs}}{1-e^{-xs}}\( [x] - x + \frac{1}{2}\)  dx.$$
Noting that $\int_0^\infty \log(1-e^{-xs}) dx  = \frac{\pi^2}{6s}$ and
$\int_0^\frac{1}{2} \log(1-e^{-xs}) dx =  \frac{1}{2} \log(s) + \int_0^\frac{1}{2} \log(x) dx + O(s)$
we may conclude that
\begin{equation}\label{eqn:hack}
-\int_0^\frac{1}{2} \log(x) dx  - s \int_{\frac{1}{2}}^\infty \frac{e^{-xs}}{1-e^{-xs}} \( [x] - x + \frac{1}{2}\)  dx =
\frac{1}{2}\log(2\pi) + O(s).\end{equation}

Following the notation of Section 3 of \cite{BM} we define
\begin{equation}
g_k(xs) = - \log(f_k(e^{-xs})).
\end{equation}
By Lemma \ref{lem:ml},
\begin{equation}\label{eqn:afterml}
\sum_{n=1}^\infty g_k(ns) = \int_{0}^\infty g_k(xs) dx - \int_0^{\frac{1}{2}} g_k(xs) dx
- s \int_{\frac{1}{2}}^\infty g_k'(xs) \([x] - x + \frac{1}{2}\)dx.
\end{equation}
Theorem 1 of \cite{HLR} gives
$\int_0^\infty g_k(xs) dx = \frac{1}{s} \frac{\pi^2}{3k(k+1)}$.
Lemma \ref{lem:primaryEigenvalueBound}
gives that for $sx\ll 1$
$$
g_k(xs) = -\log\(f_k(e^{-xs})\) = -\frac{1}{k}\log(xs)  + \frac{1}{k} (xs)^{\frac{1}{k}} + O\((xs)^{\frac{2}{k}}\)
$$
Therefore,
we have
\begin{equation}\label{eqn:1/2integral}
-\int_0^\frac{1}{2} g_k(xs) dx= \frac{1}{2k} \log(s) -\frac{1}{k} \int_0^\frac{1}{2} \log(x) dx
+O\(s^{\frac{1}{k}}\).
\end{equation}

Let $M = \lfloor s^{-\frac{1}{k} } \rfloor$. Then we have
\begin{align}
s\int_{M+\frac{1}{2}}^\infty g_k'(xs) \([x]-x+\frac{1}{2}\) dx
 =&\frac{s^2}{2}\int_{M+\frac{1}{2}}^{\infty}g_k''(xs)  \([x]-x+\frac{1}{2}\)^2 dx  \nonumber \\
 \ll&  s\int_{Ms}^\infty g''(w) dw \ll M^{-1} \ll s^{\frac{1}{k}} \label{eqn:ml_estimate1}
\end{align}
where we use $g'(Ms) = O_k\(\frac{1}{Ms}\)$ (see, for instance, Lemma 3.1 of \cite{BM}).

To estimate the integral of $g_k'$ from $\frac12$ to $M+\frac{1}{2}$ we
 take the logarithmic derivative of $f_k(e^{-w})^{k+1} - f_k(e^{-w})^k = e^{-w(k+1)} - e^{-wk}$
to obtain
$$g_k'(w) = 1 - \frac{1}{k} \frac{e^{-w}}{e^{-w}-1} + \frac{1}{k} e^{-w} \frac{f_k'(e^{-w})}{1- f_k(e^{-w})}.$$
Therefore
\begin{align}
s\int_\frac{1}{2}^{M+\frac{1}{2}} g_k'(xs) \([x]-x+\frac{1}{2}\) dx  = &
- \frac{s}{k} \int_{\frac{1}{2}}^{M+\frac{1}{2}} \frac{e^{-xs}}{1-e^{-xs}}\([x]-x+\frac{1}{2}\) dx  \nonumber
 \\ &\hspace{.4in} + \frac{s}{k}\int_{\frac{1}{2}}^{M+\frac{1}{2}} e^{-xs}
 \frac{f_k'(e^{-xs})}{1-f_k(e^{-xs})} \([x]-x+\frac{1}{2}\)  dx  \label{eqn:ml_estimate2a}
\end{align}
Observe that we have
\begin{align}
 \int_{\frac{1}{2}}^{M+\frac{1}{2}}& \frac{e^{-xs}}{1-e^{-xs}}\([x]-x+\frac{1}{2}\) dx \nonumber \\=&
   \int_{\frac{1}{2}}^\infty \frac{e^{-xs}}{1-e^{-xs}}\([x]-x+\frac{1}{2}\) dx - \int_{M+\frac{1}{2}}^{\infty}  \frac{e^{-xs}}{1-e^{-xs}}\([x]-x+\frac{1}{2}\) dx \nonumber \\
   =&    \int_{\frac{1}{2}}^\infty \frac{e^{-xs}}{1-e^{-xs}}\([x]-x+\frac{1}{2}\) dx + \frac{s}{2}
   \int_{M+\frac{1}{2}}^{\infty}  \frac{e^{-xs}}{(1-e^{-xs})^2}\([x]-x+\frac{1}{2}\)^2 dx  \nonumber \\
   =&   \int_{\frac{1}{2}}^\infty \frac{e^{-xs}}{1-e^{-xs}}\([x]-x+\frac{1}{2}\) dx  + O(se^{-Ms}). \label{eqn:ml_estimate2}
\end{align}

Additionally, integrating by parts we obtain
\begin{align}
s\int_{\frac{1}{2}}^{M+\frac{1}{2}}& e^{-xs}
 \frac{f_k'(e^{-xs})}{1-f_k(e^{-xs})} \([x]-x+\frac{1}{2}\)  dx \nonumber \\
\ll &   s \cdot \frac{e^{-xs} f_k'(e^{-xs})}{1-f_k(e^{-xs}) }\Big |_{\frac{1}{2}}^{M+\frac{1}{2}}
 \ll_k s^{\frac{1}{k}} \(1+ M^{-\frac{k-1}{k}}\)\label{eqn:ml_estimate3}
\end{align}
where we have used that monotonicity of $\log(1-f_k(w))$ and $f_k'(z) = O\(z^{\frac{1-k}{k}}\)$ for $z$ near $0$.
Returning to \eqref{eqn:afterml} and using \eqref{eqn:hack} and \eqref{eqn:ml_estimate1}-\eqref{eqn:ml_estimate3}
\begin{align*}
 -\frac{1}{k} &\int_0^\frac{1}{2} \log(x) dx - s \int_{\frac{1}{2}}^\infty g_k'(xs) \([x] - x + \frac{1}{2} \) dx  \\
  =&
 \frac{1}{k}\(  -\int_0^\frac{1}{2} \log(x) dx - s \int_{\frac{1}{2}}^\infty \frac{e^{-xs}}{1-e^{-xs}}\([x]-x+\frac{1}{2}\) dx  \) +
 O\(s^{\frac{1}{k}} + M^{-1}\)  \\
 =& \frac{1}{2k}\log(2\pi)  + O(s^{\frac{1}{k}}) \end{align*}
Finally, this together with \eqref{eqn:afterml} and \eqref{eqn:1/2integral} gives the result.
\end{proof}

\section{Proof of Theorem \ref{thm:partitionAsymp}}\label{sec:NonReal}
In this section, we prove Theorem \ref{thm:partitionAsymp} and thus Theorem \ref{thm:main}.

\begin{proof}[Proof of Theorem \ref{thm:partitionAsymp}]
We have $G_k(q) = {\bf e}_1^T \prod_{n=N+1}^\infty m(n) \cdot \prod_{n=1}^N m(n) {\bf e}_1$.
It follows from Theorem  \ref{prop:transitionapproximation},
Proposition \ref{prop:transitionComparison},
 and Theorems  \ref{thm:transitionProduct} and \ref{thm:eigenvalueProduct} that for appropriate $N$,
\begin{align*}\label{eqn:GkTransition}
G_k(e^{-s})
=&
\frac{1}{k} \exp\(\frac{\pi^2}{6s} \(1-\frac{2}{k(k+1)}\)
+ O\(
N^{-\frac{k+1}{k}} s^{-\frac{1}{k}} + sN^2 +s^{\frac{2}{k}}N^{\frac{k+2}{k}}  + N^{-1}\)\).
\end{align*}
Setting $N=\floor{s^{-\frac{3}{2k+3}}}$ yields
the result.
\end{proof}

\section{Proof of Theorem \ref{thm:ksequencePartitions}}\label{sec:PartitionAsymp}
In this section we apply a result of Ingham \cite{ingham} to deduce the asymptotics for $p_k(n)$
from the asymptotics of $G_k(q)$ as $q \to 1$.  In particular, we have the following result
which is a special case of Theorem 1 of \cite{ingham} and is given as Theorem 4.1 of \cite{BHMV}.
\begin{theorem}[Ingham]\label{thm:ingham} Let $f(z) = \sum_{n=0}^\infty a(n) z^n$ be a power series with real
nonnegative coefficients and radius of convergence equal to 1. If there exists $A>0$, $\lambda$,
$\alpha \in \R$ such that
$$f(z) \sim \lambda (-\log(z))^\alpha \exp\( - \frac{A}{\log(z)}\)$$ as $z\to 1^-$, then
$$\sum_{m=0}^n a(m) \sim \frac{\lambda}{2\sqrt{\pi}} \frac{A^{\frac{\alpha}{2} - \frac{1}{4}}}{n^{\frac{\alpha}{2} + \frac{1}{4}}}
\exp\( 2\sqrt{An}\)$$
as $n\to \infty$.
\end{theorem}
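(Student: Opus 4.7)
The plan is to combine Cauchy's coefficient formula with a saddle point (Laplace method) evaluation, supplemented by a smoothing argument to handle the Tauberian step. The starting point is that $\sum_{n\geq 0} S(n) z^n = f(z)/(1-z)$, so for any $0<r<1$,
\[
S(n) = \frac{1}{2\pi i}\oint_{|z|=r}\frac{f(z)}{(1-z) z^{n+1}}\,dz.
\]
Substituting $z = e^{-s}$ and applying the hypothesis, the integrand near $z=1$ behaves like $\lambda s^{\alpha-1} e^{A/s + ns}$ times slowly varying corrections. The exponent $A/s+ns$ is minimized on the positive real axis at $s^* = \sqrt{A/n}$, where it equals $2\sqrt{An}$, which exactly matches the growth rate $\exp(2\sqrt{An})$ in the conclusion. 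I would choose the contour radius to be $r = e^{-s^*}$ so it passes through this saddle.

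Next, I would parametrize $s = s^* + it$, expand $A/s+ns$ to second order about $s^*$ as $2\sqrt{An} + A^{-1/2} n^{3/2} (s-s^*)^2 + O((s-s^*)^3)$, and carry out the resulting Gaussian integral. The Gaussian contributes $\sqrt{\pi A^{1/2}/n^{3/2}}$, while the prefactor evaluated at $s^*$ is $\lambda (s^*)^{\alpha-1} = \lambda A^{(\alpha-1)/2} n^{-(\alpha-1)/2}$. Multiplying these together with the $\frac{1}{2\pi}$ from the parametrization gives the claimed constant $\frac{\lambda}{2\sqrt{\pi}} A^{\alpha/2-1/4} n^{-(\alpha/2+1/4)}$. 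This is essentially a routine, if careful, calculation once the saddle is located.

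The main obstacle is the Tauberian step: controlling the integrand on the portion of the contour bounded away from $z=1$. The hypothesis only tells us about $f(z)$ as $z \to 1^-$ along the positive real axis; off that axis, the best we get from nonnegativity of $a(m)$ is $|f(re^{i\theta})|\leq f(r)$, which is comparable in size to the main term and therefore too weak. I would overcome this by smoothing. For a width $\Delta$ with $1\ll \Delta \ll \sqrt{n/A}$ and a nonnegative bump $\phi_{\Delta}$ of mass $1$ and support $\asymp \Delta$, form the smoothed partial sum $\widetilde S_\Delta(n) = \sum_m a(m) \phi_\Delta(n-m)$. Its contour-integral representation contains the factor $\widehat{\phi_\Delta}(t)$, which confines the integral to $|t|\lesssim \Delta^{-1}$, exactly the range in which the saddle-point approximation is valid; the contribution from $|t|\gtrsim \Delta^{-1}$ is killed by the decay of $\widehat{\phi_\Delta}$ together with the trivial bound $|f(re^{i\theta})|\leq f(r)$. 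The saddle-point analysis then computes $\widetilde S_\Delta(n)$ unambiguously.

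Finally, I would recover $S(n)$ by monotonicity: since $a(m)\geq 0$, the unsmoothed sum $S(n)$ is sandwiched between appropriate shifts $\widetilde S_\Delta(n - C\Delta)$ and $\widetilde S_\Delta(n+C\Delta)$. Because $\Delta = o(\sqrt{n/A})$ the two sandwich bounds have the same leading asymptotic, yielding the conclusion. (Equivalently, one could invoke Karamata- or de Bruijn-style Tauberian machinery, which packages this sandwich argument abstractly.) The essential role of the nonnegativity hypothesis is exactly to enable the monotonicity sandwich.
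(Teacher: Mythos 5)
The paper itself does not prove this statement; it invokes it as a black box, citing it as a special case of Theorem~1 of \cite{ingham} and as Theorem~4.1 of \cite{BHMV}. So you are being compared to a citation, not to an argument in the text. That said, your proposal has a genuine gap in the Tauberian step, even though your saddle-point bookkeeping is correct: the saddle is at $s^*=\sqrt{A/n}$ with value $2\sqrt{An}$, the Gaussian width in the angular variable is $\asymp n^{-3/4}A^{1/4}$, and combining the Gaussian factor, the prefactor $\lambda(s^*)^{\alpha-1}$, and the $\tfrac{1}{2\pi}$ reproduces the claimed constant. The problem lies in the range $n^{-3/4}\ll|\theta|\lesssim\Delta^{-1}$, where the Gaussian no longer localizes the integral but the decay of $\widehat{\phi}_\Delta$ has not yet set in. Your sandwich argument forces $\Delta=o(\sqrt{n/A})$, so $\Delta^{-1}\gg s^*=\sqrt{A/n}\gg n^{-3/4}$, and this window is much wider than the saddle. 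On it the only available information is the trivial bound $|f(re^{i\theta})|\le f(r)$, while the factor $|1-z|^{-1}\asymp \max(|\theta|,s^*)^{-1}$ is large; the resulting contribution is of size $\asymp f(r)r^{-n}$, whereas the target main term is $\asymp f(r)r^{-n}\cdot n^{-1/4}$. So the uncontrolled piece is not small relative to the main term, it potentially \emph{dominates} it. You cannot fix this by shrinking the Fourier cutoff $\Delta^{-1}$ down to the saddle width $n^{-3/4}$: that would require $\Delta\gtrsim n^{3/4}$, which is incompatible with the sandwich constraint $\Delta\ll\sqrt{n/A}$.

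This incompatibility is precisely what makes the result a genuine Tauberian theorem rather than a singularity-transfer/circle-method statement: the hypothesis controls $f$ only on the positive real axis, and nonnegativity of the $a(m)$ yields nothing better than $|f(re^{i\theta})|\le f(r)$ off that axis, which is off by a factor $n^{1/4}$ from what a Cauchy-integral argument needs. Ingham's actual proof in \cite{ingham} does not pass through a contour integral at all; it is a real-variable Tauberian argument of Hardy--Littlewood type adapted to the exponential scale (see also Korevaar's book on Tauberian theory), and that mechanism would have to be reproduced to make your sketch into a proof. A secondary, repairable slip: with $\phi_\Delta$ a unit-mass bump, $\widetilde S_\Delta(n)=\sum_m a(m)\phi_\Delta(n-m)$ is a smoothed $a(n)$, not a smoothed $S(n)$; you would want to convolve against a smoothed Heaviside step instead, but the obstruction above survives that repair.
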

\begin{proof}[Proof of Theorem \ref{thm:ksequencePartitions}]
By Lemma 10 of \cite{HLR} $(1-q)G_k(q) = \sum_{n=0}^\infty (p_k(n) - p_k(n-1)) q^n$
has nonnegative coefficients.  Applying Theorems \ref{thm:partitionAsymp} and \ref{thm:ingham} gives the result.
\end{proof}

\end{document}